\newtheorem{theorem}{Theorem}[section]
\newtheorem{proposition}{Proposition}[section]
\newtheorem{lemma}{Lemma}[section]
\newtheorem{corollary}{Corollary}[section]
\theoremstyle{definition}
\newtheorem{definition}{Definition}[section]
\theoremstyle{definition}
\newtheorem{remark}{Remark}[section]
\theoremstyle{remark}
\def\ZF{\ensuremath{\mathscr F}}
\def\BMO{{\rm BMO }}
\def\BLO{{\rm BLO }}
\def\OSC{{\rm OSC}}
\def\LOSC{{\rm LOSC}}
\def\SUP{{\rm SUP}}
\def\INF{{\rm INF}}
\def\esssup{{\rm esssup\, }}
\def\essinf{{\rm essinf\, }}
\def\MM^d{\ensuremath{\mathfrak M}}
\def\MM{\ensuremath{\mathcal M}}
\def\ZM{\ensuremath{\mathfrak M}}
\def\ZB{\ensuremath{\mathscr B}}
\def\ZA{\ensuremath{\mathscr A}}
\def\ZZ{\ensuremath{\mathbb Z}}
\def\ZI{\ensuremath{\textbf 1}}
\def\ZK{\ensuremath{\mathcal K}}
\def\ZR{\ensuremath{\mathbb R}}
\def\ZT{\ensuremath{\mathbb T}}
\def\pr{\ensuremath{\mathrm {pr}}}
\def\ZG{{\mathscr G\,}}
\numberwithin{equation}{section}
\def\md#1#2\emd{\ifx0#1
	\begin{equation*} #2 \end{equation*}\fi  
	\ifx1#1\begin{equation}#2\end{equation}\fi   
	\ifx2#1\begin{align*}#2\end{align*}\fi   
	\ifx3#1\begin{align}#2\end{align}\fi    
	\ifx4#1\begin{gather*}#2\end{gather*}\fi  
	\ifx5#1\begin{gather}#2\end{gather}\fi   
	\ifx6#1\begin{multline*}#2\end{multline*}\fi  
	\ifx7#1\begin{multline}#2\end{multline}\fi  
	\ifx8#1\begin{multline*}\begin{split}#2\end{split}\end{multline*}\fi
	\ifx9#1\begin{multline}\begin{split}#2\end{split}\end{multline}\fi
}
\newcommand {\e }[1]{\eqref{#1}}
\newcommand {\lem }[1]{Lemma \ref{#1}}
\newcommand {\cor }[1]{Corollary \ref{#1}}
\newcommand {\pro }[1]{Proposition \ref{#1}}
\newcommand {\trm }[1]{Theorem \ref{#1}}
\newcommand {\sect }[1]{Section \ref{#1}}
\newcommand {\df }[1]{Definition \ref{#1}}
\title[] {Maximal operators on spaces $\BMO$ and $\BLO$}
\author{Grigori A. Karagulyan}
\address{Institute of Mathematics of NAS of RA, Marshal Baghramian ave., 24/5, Yerevan, 0019, Armenia} 
\address{Faculty of Mathematics and Mechanics, Yerevan State
University, Alex Manoogian, 1, 0025, Yerevan, Armenia} 
\email{g.karagulyan@ysu.am}
\thanks{The work was supported by the Science Committee of RA, in the frames of the research project 21AG‐1A045}
\subjclass[2010]{42B25, 42B35, 43A85}
\keywords{BMO spaces, BLO spaces , ball-bases, maximal operators}
\begin{document}

\begin{abstract}
We consider maximal kernel-operators on abstract measure spaces $(X,\mu)$ equipped with a ball-basis. We prove that under certain asymptotic condition on the kernels those operators maps boundedly $\BMO(X)$ into $\BLO(X)$, generalizing the well-known results of Bennett-DeVore-Sharpley \cite{BDS} and Bennett \cite{Ben} for the Hardy-Littlewood maximal function. As a particular case of such an operator one can consider the maximal function 
\begin{equation}
	\MM_\phi f(x)=\sup_{r>0}\frac{1}{r^d}\int_{\ZR^d}|f(t)|\phi\left(\frac{x-t}{r}\right)dt,
\end{equation} 
and its non-tangential version.  Here $\phi(x)\ge 0$ is a bounded spherical function on $\ZR^d$,  decreasing with respect to $|x|$ and satisfying the bound
\begin{equation*}
	\int_{\ZR^d}\phi (x)\log (2+|x|)dx<\infty. 
\end{equation*}
We prove that if $f\in\BMO(\ZR^d)$ and $\MM_\phi(f)$ is not identically infinite, then $\MM_\phi(f)\in \BLO(\ZR^d)$.
Our main result is an inequality, providing an estimation of certain local oscillation of the maximal function $\MM(f)$ by a local sharp function of $f$.  
\end{abstract}

	\maketitle  

\section{Introduction}\label{S4}
\subsection{$\BMO$ and the Hardy-Littlewood maximal function}
The space of functions of bounded mean oscillation, $\BMO(\ZR^d)$, was introduced by John and Nirenberg \cite{JoNi}. This space is the collection of locally integrable functions  $f$ on $\ZR^d$ such that
\begin{equation}\label{x0}
	\|f\|_{\BMO}=\sup_{B}\frac{1}{|B|}\int_B|f-f_B|<\infty,
\end{equation}
where $f_B$ denotes the mean of $f$ over a ball $B\subset \ZR^d$, and the supremum is taken over all the balls $B$. This is not properly a norm, since any function which is constant almost everywhere has zero oscillation. However, modulo constant, $\BMO$ becomes a Banach space with respect to the norm \e{x0}. The space $\BMO$ plays an important role in the harmonic analysis and in the study of partial differential equations. One can check that
$L^\infty(\ZR^d)\subset \BMO(\ZR^d)$, but there exist unbounded $\BMO$ functions. However, the fundamental inequality of John-Nirenberg \cite{JoNi} states that $\BMO$ functions have exponential oscillations over the balls. Namely, for any ball $B\subset \ZR^d$ we have
\begin{equation}\label{JN}
	|\{x\in B:\, |f(x)-f_B|>\lambda\}|\le c_1|B|\exp(-c_2\lambda/\|f\|_\BMO),\quad \lambda>0,
\end{equation}
where $c_1$ and $c_2$ are positive absolute constants. A remarkable results about $\BMO$ are the duality relationship between $\BMO$ and the Hardy space $H^1$ due to Ch.~Fefferman \cite{Fef} and a deep connection between the Carleson measures and the $\BMO$-functions established by Ch.~Fefferman and E.~Stein \cite{FeSt}.
There are different characterizations and generalizations of spaces $\BMO$. Coifman and Rochberg in \cite{CoRo} have introduced the space $\BLO$ of functions of bounded lower oscillation. This space is defined analogously to $\BMO$ except that in \e{x0} one subtracts from $f$ the essential infimum of $f$ on $B$ instead of the average $f_B$. One can easily check that $L^\infty\subset \BLO\subset \BMO$ and moreover,
\begin{equation*}
	\|f\|_\BMO\le 2\|f\|_\BLO,\quad \|f\|_\BLO\le 2\|f\|_\infty.
\end{equation*} 
It was shown in \cite{CoRo} that every $\BMO$-function is representable as a difference of two $\BLO$-functions. 

It is of interest the problem of boundedness of certain operators in harmonic analysis on spaces $\BMO$ and $\BLO$.  The fact that ordinary Calderón–Zyg\-mund operators map $L^\infty$ boundedly into $\BMO$ were independently obtained by Peetre \cite{Pee}, Spanne \cite{Spa} and Stein \cite {Ste}. Peetre \cite{Pee} also observed that the translation-invariant Calderón–Zygmund operators actually map $\BMO$ to itself. We refer also papers  \cite{Lec1, Lec2, Ler, ChFr} for extensions and 
alternative proofs  of these results, including consideration of some operators on spaces $\BLO$.

In the present paper we are mostly interested in boundedness of maximal operators on spaces $\BMO$ and $\BLO$. A classical case of such an operator is the Hardy-Littlewood maximal function, which for a locally integrable function $f$ on $\ZR^d$ is given by
\begin{equation}\label{HL}
	Mf(x)=\sup_{B\ni x}\frac{1}{|B|}\int_B|f|,
\end{equation}
where the supremum is taken over all the balls $B\subset \ZR^d$, containing the point $x\in \ZR^d$. A classical result of Bennett-DeVore-Sharpley \cite{BDS} states that the operator $M$ is bounded on $\BMO(\ZR^d)$. Bennett \cite{Ben} extended this result, proving that the maximal function maps  boundedly $\BMO(\ZR^d)$ into $\BLO(\ZR^d)$. Further works in this direction can be found in \cite{DaGi, ChFr, Ou, Saa}.

In this paper we consider maximal operators of general type. Let $\phi(x)\ge 0$ be a bounded integrable spherical function on $\ZR^d$,  decreasing with respect to $|x|$. Consider the convolution type maximal operator
\begin{equation}\label{x55}
	\MM_\phi f(x)=\sup_{r>0}\frac{1}{r^d}\int_{\ZR^d}|f(t)|\phi\left(\frac{x-t}{r}\right)dt.
\end{equation}
As a corollary of the main theorem of the paper (\trm{T2}), we particularly obtain the following result.
\begin{theorem}\label{T1}
Let $\phi$ be a bounded spherical function on $\ZR^d$,  decreasing with respect to $|x|$ and satisfying the bound
\begin{equation}\label{x56}
I(\phi)	=\int_{\ZR^d}\phi(x)\log (2+|x|)dx<\infty.
\end{equation}
If $f\in \BMO(\ZR^d)$ and $\MM(f)$ is not identically infinite, then $\MM_\phi(f)\in \BLO(\ZR^d)$ and 
	\begin{equation}\label{x63}
	\|\MM_\phi (f)\|_{\BLO(\ZR^d)}\le c_dI(\phi)\|f\|_{\BMO(\ZR^d)},
\end{equation}
where $c_d$ is a constant depending only on the dimension $d$.
\end{theorem}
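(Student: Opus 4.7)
The plan is to deduce \trm{T1} from the paper's abstract main theorem \trm{T2}, applied to the Euclidean ball-basis on $(\ZR^d, dx)$ together with the dilation-invariant kernel family $K_r(x,t) = r^{-d}\phi\bigl((x-t)/r\bigr)$, $r > 0$, so that $\MM_\phi f(x) = \sup_{r>0}\int K_r(x,t)|f(t)|\,dt$ is precisely a maximal kernel-operator in the framework of \trm{T2}. The remaining work is to verify that the abstract asymptotic kernel hypothesis of \trm{T2} translates to the log-integrability \e{x56} and to track the dimensional constant $c_d$. Since $|f|\in\BMO(\ZR^d)$ with $\||f|\|_{\BMO}\le 2\|f\|_{\BMO}$ and $\MM_\phi f = \MM_\phi|f|$, one may assume $f\ge 0$, in which case $\MM_\phi f(x) = \sup_{r>0}(\phi_r*f)(x)$ with $\phi_r(y)=r^{-d}\phi(y/r)$.

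The radial monotonicity of $\phi$ underpins everything through the layer-cake representation
\begin{equation*}
\phi(y) = \int_0^{\|\phi\|_\infty}\ZI_{B(0,\rho(s))}(y)\,ds,
\end{equation*}
which realizes each $K_r$ as a continuous superposition of normalized ball-indicators and, in particular, yields the pointwise bound $\MM_\phi g(x)\le \|\phi\|_1 Mg(x)$ for nonnegative $g$. The key cancellation underlying the $\BLO$ estimate is $\int\phi_r(x-t)\,dt = \|\phi\|_1$ for every $x,r$, which gives
\begin{equation*}
\phi_r*f(x) - \phi_r*f(y) = \int\bigl[\phi_r(x-t) - \phi_r(y-t)\bigr]\bigl(f(t) - f_B\bigr)\,dt,
\end{equation*}
so the constant part $f_B$ drops out of the oscillation. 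For a ball $B = B(x_0, R)$, I would split the $t$-integral into $\tilde B = 2B$ and the dyadic annuli $A_k = 2^{k+1}B\setminus 2^k B$, $k\ge 1$. The local part, involving $|f - f_B|\ZI_{\tilde B}$, is handled by the pointwise bound above combined with the $L\log L \to L^1_{\mathrm{loc}}$ property of $M$ and John--Nirenberg \e{JN}; this contributes $O(\|\phi\|_1\|f\|_\BMO)\le O(I(\phi)\|f\|_\BMO)$ to the $\BLO$ seminorm.

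On each dyadic annulus $A_k$, the average of $|f - f_B|$ is bounded by $Ck\|f\|_\BMO$ (a standard telescoping consequence of John--Nirenberg between $B$ and $2^{k+1}B$), while the kernel mass $r^{-d}\int_{A_k}\phi((x-t)/r)\,dt = \int_{|u|\in[2^k R/r,\, 2^{k+1}R/r)}\phi(u)\,du$ is controlled via the layer cake in terms of $\phi$ at the relevant scale. Summing these dyadic tails weighted by $(1+k)$ yields
\begin{equation*}
\sum_{k\ge 0}(1+k)\int_{2^k\le|u|<2^{k+1}}\phi(u)\,du \simeq \int_{\ZR^d}\phi(u)\log(2+|u|)\,du = I(\phi),
\end{equation*}
which is precisely \e{x56}. \trm{T2} then delivers $\MM_\phi f\in\BLO(\ZR^d)$ with the asserted norm bound, under the standing hypothesis that $\MM_\phi f\not\equiv\infty$ so that membership in $\BLO$ is meaningful.

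The main obstacle I anticipate is precisely the matching in the previous paragraph: confirming that the abstract tail-decay condition in \trm{T2} is satisfied by the dilation-invariant convolution kernels $K_r$ in exactly the right form (with $r$ a continuous parameter, rather than a discrete sup over balls of the basis), and that the dyadic-annulus summation reproduces $I(\phi)$ with the sharp dimensional constant $c_d$ rather than a weaker quantity such as $\sup_{\rho>0}\rho^d\phi(\rho)$. Once this matching is secured, the reductions (to $f\ge 0$, to the layer-cake decomposition, and to identifying the kernel family) and the assembly of the local $L^1$ bound with the tail oscillation bound are routine bookkeeping around this central computation.
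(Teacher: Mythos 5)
Your overall plan (apply \trm{T2}, or really \cor{KC2}, to the Euclidean ball-basis with kernels $\phi_B(y)=r^{-d}\phi\bigl((y-x_0)/r\bigr)$ for $B=B(x_0,r)$ and $\ZB(x)=$ balls centered at $x$) is exactly what the paper does, and you correctly identify the log-weighted integral \e{x56} as the controlling quantity. But the proposal explicitly stops short of the one step on which the deduction actually turns: you never construct the modulus of continuity $\omega$ required by the kernel-structure condition \e{y4}, and you never verify \e{y7}. The matching you flag as ``the main obstacle'' is precisely this: one must take $\omega(t)\sim\phi(t^{1/d})$ on $[1,\infty)$ (normalized so $\omega(1)=1$; this is where radial monotonicity and $\phi(1)>0$ enter), check both inequalities in \e{y4} using $d(y,B)\sim(|y-x_0|+r)^d$ and $\mu(B)\sim r^d$, and then pass to polar coordinates to get
\begin{equation*}
\int_1^\infty\omega(t)\log(1+t)\,dt\;\sim\;\int_1^\infty\phi(s)\log(2+s)\,s^{d-1}\,ds\;\lesssim\;I(\phi).
\end{equation*}
Your dyadic-annulus sum $\sum_k(1+k)\int_{2^k\le|u|<2^{k+1}}\phi$ is a discrete surrogate for the same computation, but it is never tied to the abstract $\omega$, so the hypothesis of \trm{T2} remains unverified. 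The worry about $r$ being a continuous parameter is not a real obstacle: the Euclidean ball-basis contains balls of every radius, $\ZB(x)$ is simply all balls centered at $x$, and conditions G1)--G2) are immediate.

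A secondary issue is that the middle of the proposal (layer-cake representation, the cancellation identity $\phi_r*f(x)-\phi_r*f(y)=\int[\phi_r(x-t)-\phi_r(y-t)](f(t)-f_B)\,dt$, the dyadic-annulus splitting, the John--Nirenberg estimates) is not a verification of the hypotheses of \trm{T2} but rather a redevelopment of the internal machinery of its proof, specifically \lem{LK2} and the argument around \e{a10}--\e{a11}. If you apply \trm{T2} as a black box, none of that work is needed; if you intend a self-contained Euclidean proof, then that scaffolding has to be completed into a full argument rather than invoked alongside \trm{T2}. You should also note that what is applied is \cor{KC2}, not \trm{T2} directly: the passage from the local $\LOSC_{B,\alpha}$ bound \e{2} to the $\BLO$ norm uses \trm{T4}, which in turn needs the regularity of the ball-basis.
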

In addition to the function $\phi$ being constant on the unit ball $B=\{|x|\le 1\}$, we may require the same bound \e{x63} for the non-tangential maximal function
\begin{equation}\label{r55}
	\overline\MM_\phi f(x)=\sup_{r>0, \, |u|<r}\frac{1}{r^d}\int_{\ZR^d}|f(t)|\phi\left(\frac{x+u-t}{r}\right)dt.
\end{equation}
Observe that choosing $\phi=\ZI_{\{|x|<1\}}$, the operators $M_\phi$ and $\overline\MM_\phi $ coincide with the centered and the uncentered maximal operators on $\ZR^d$ respectively. Other corollaries of the main result will be considered in \sect{S6}.

\subsection{Definition of ball-bases}
We will work on abstract measure spaces equipped with a ball-basis. The concept of ball-basis was introduced in \cite{Kar3, Kar1}. Its definition is a selection of basic properties of classical balls (or cubes) in $\ZR^n$ that are crucial in the study of singular operators on $\ZR^n$. The same papers have also introduced a class of bounded oscillation (BO) operators, unifying many operators in harmonic analysis (such as Calder\'on-Zygmund operators, the maximal function homogeneous and non-homogeneous martingale transforms, Carleson operators, etc.). BO operators were studied in these papers in the context of weighted estimates, sparse domination, good-$\lambda$ and exponential decay inequalities. We would like to refer also \cite{Ming} for a multilinear generalization of BO operators and those properties. 

Working with general ball-bases enables us to unify different maximal operators (centered, uncentered, dyadic or martingale) in one, rather than consider each operator separately. Note that there is no equivalence principles between different maximal operators (such as centered, uncentered, dyadic) on spaces $\BMO$ or $\BLO$, since those spaces are not Banach latices. Even the boundedness of the uncentered maximal operator on $\BMO(\ZR^d) $ does not imply that of for the centered one. Moreover, the classical approach provided in \cite{BDS} is applicable only for the uncentered maximal operator. 

Now recall the definition of ball-basis.
\begin{definition} Let $(X,\ZM, \mu)$ be a measure space with a $\sigma$-algebra $\ZM$ and a measure $\mu$. A family of measurable sets $\ZB$ is said to be a ball-basis if it satisfies the following conditions: 
	\begin{enumerate}
		\item[B1)] $0<\mu(B)<\infty$ for any ball $B\in\ZB$.
		\item[B2)] For any points $x,y\in X$ there exists a ball $B\ni x,y$.
		\item[B3)] If $E\in \ZM$, then for any $\varepsilon>0$ there exists a (finite or infinite) sequence of balls $B_k$, $k=1,2,\ldots$, such that $\mu(E\bigtriangleup \cup_k B_k)<\varepsilon$.
		\item[B4)] For any $B\in\ZB$ there is a ball $ B^*\in\ZB $ (called {\rm hull-ball} of $B$), satisfying the conditions
		\begin{align}
			&\bigcup_{A\in\ZB:\, \mu(A)\le 2\mu(B),\, A\cap B\neq\varnothing}A\subset  B^*,\label{h12}\\
			&\qquad\qquad\mu\left(B^*\right)\le \ZK\mu(B),\label{h13}
		\end{align}
		where $\ZK$ is a positive constant independent of $B$. 
	\end{enumerate}
\end{definition}
Ball-bases can additionally have the following properties.
\begin{definition}\label{DD}
	We say that a ball-basis $\ZB$ is doubling if there is a constant $\eta>2$ such that for any ball $B$ with $B^*\neq X$ one can find a ball $B'\supset B$ such that
	\begin{align}
		2\mu(B)\le \mu(B')\le\eta  \cdot \mu(B).\label{h73}
	\end{align}
\end{definition}
\begin{definition}\label{D3}
	A ball basis $\ZB$ is said to be regular if there is a constant $0<\theta<1$ such that for any two balls $B$ and $A$, satisfying $\mu(B)\le \mu(A)$, $B\cap A\neq \varnothing$ we have $\mu(B^*\cap A)\ge \theta \mu(B^*)$.
\end{definition}

Let us state some basic examples of ball-bases:
\begin{enumerate}
\item The family of Euclidean balls (or cubes) in $\ZR^d$ is a doubling ball-basis.  
\item Dyadic cubes in $\ZR^d$ form a doubling ball-basis.
\item Let $\ZT= \ZR/2\pi$ be the unit circle. The family of arcs in $\ZT$ is a doubling ball-basis.
\item The families of metric balls in measure spaces of homogeneous type form a ball-basis with the doubling condition (see \cite{Kar3}, sec. 7,  for the proof)
\item Let $(X,\ZM, \mu)$ be a measure space and $\{\ZB_n:\, n\in\ZZ\}$ be collections of measurable sets (called filtration) such that
\begin{itemize}
	\item each $\ZB_n$ forms a finite or countable partition of $X$,
	\item each $A\in \ZB_n$ is a union of some sets of $\ZB_{n+1}$,
	\item the collection $\ZB=\cup_{n\in\ZZ}\ZB_n$ generates the $\sigma$-algebra $\ZM$,
	\item for any points $x,y\in X$ there is a set $A\in X$ such that $x,y\in A$.
\end{itemize}
For any $A\in \ZB_n$ we denote by $\pr(A)$ the parent-ball of $A$, that is the unique element of $\ZB_{n-1}$, containing $A$. One can easily check that $\ZB$ satisfies  the ball-basis conditions B1)-B4), where for $A\in \ZB$ the hull-ball $A^*$ is the maximal element in $\ZB$, satisfying $\mu(A^*)\le 2\mu(A)$. Observe that $A^*$ can coincide with $A$ and it occurs when $\mu(\pr(A))>2\mu(A)$.
Besides, a martingale ball-basis is doubling if and only if $\mu(\pr(A) )\le c\mu(A)$ for any $A\in \ZB$ and for some constant $c>0$.
\end{enumerate}
We also note that all these ball-bases are regular (see \df{D3})
\begin{definition}\label{D2}
A family of non-negative real valued functions
\begin{equation}\label{x47}
	\{\phi_B:\, B\in \ZB\}\subset L^1(X)\cap L^\infty(X)
\end{equation}
is said to be kernel-structure if there are positive constants $c_0$, $c_1$, $c_2$ and $c_3$such that
	\begin{enumerate}
		\item[K1)] For every ball $B\in \ZB$ we have 
		\begin{equation}\label{x45}
			\int_{X}\phi_B=1.
		\end{equation}
\item[K2)] For any ball $B\in \ZB$ we have the inequalities
\begin{equation}\label{y4}
	c_1\cdot \frac{\ZI_B(x)}{\mu(B)}\le\phi_B(x)\le c_2\cdot \frac{1}{\mu(B)}\omega\left(\frac{d(x,B)}{\mu(B)}\right), \quad x\in X  (\hbox{ see }\e{x48}).
\end{equation}
where $\omega:[1,\infty)\to [0,1]$ is a modulus of continuity, i.e. it is a non-increasing function such that $\omega(1)=1$ and $\omega(2x)\le c_0\cdot \omega(x)$.
\end{enumerate}
\end{definition}
\begin{definition}\label{D4}
	Let for any $x\in X$ we are given a families of balls $\ZB(x)\subset \ZB$.
	The collection 
	\begin{equation}\label{x64}
		\{\ZB(x):\, x\in X\}
	\end{equation}
	is said to be a basis-structure if
	\begin{enumerate}
		\item[G1)] $x\in B$ for any $B\in \ZB(x)$, 
		\item[G2)]  for any ball $B\ni x$ (not necessarily from $\ZB(x)$) there is a ball $\bar B\in \ZB(x)$ with $\bar B\supset B,\quad \mu(\bar B)\le \eta \mu(B)$,
		where $\eta \ge 1$ is a constant,
	\end{enumerate}
\end{definition}
Examples of kernel and basis structures will be considered in \sect{S6}. Given a kernel-structure and a basis-structure define a kernel-basis couple (KB-couple)
\begin{equation}
	\ZG=\big\{\{\phi_B:\, B\in \ZB\},\{\ZB(x):\, x\in X\}\big\}.
\end{equation}
Any KB-couple $\ZG$ defines a maximal operator (see \e{y6}), which will be the main object of study of the present paper.
\subsection{Notations}
Denote by $L_{loc}(X)$ the space of functions (called locally integrable), which have finite integrals on any ball $B\in \ZB$, and let $L^0(X)$ be the space of almost everywhere finite measurable functions on $X$. We set 
\begin{align}
	&f_B=\frac{1}{\mu(B)}\int_Bf,\\
	&	\langle f\rangle_B=\frac{1}{\mu(B)}\int_B|f|, \\
	&\langle f\rangle_B^*=\sup_{A\in \ZB:A\supset B}\langle f\rangle_{A},\label{y56}\\
	&\langle f\rangle_{\#,B}=\langle f-f_B\rangle_B=\frac{1}{\mu(B)}\int_B|f-f_B|,
\end{align}
\begin{align}
	& \langle f\rangle_{\#, B}^*=\sup_{A\in \ZB:A\supset B}\langle f\rangle_{\#,A},\label{x5}\\ 
	&\SUP_E(f)=\esssup_{x\in E}|f(x)|,\\
	& \INF_E(f)=\essinf_{x\in E}|f(x)|,\label{y98}\\
	&\OSC_E(f)=\esssup_{x,x'\in E}|f(x)-f(x')|=\SUP_E(f)-\INF_E(f),\label{y97}\\
	&d(x,B)=\inf_{A\in \ZB:\, A\supset B\cup \{x\}}\mu(A).\label{x48}\\
	&\OSC_{B,\alpha}(f)=\inf_{E\subset B:\, \mu(E)> \alpha\mu(B)}\OSC_E(f),\quad 0<\alpha <1,\label{y100}\\
	&\LOSC_{B,\alpha}(f)=\inf_{\stackrel{E\subset B:\, \mu(E)> \alpha\mu(B)}{\INF_E(f)=\INF_B(f)}}\OSC_E(f),\quad 0<\alpha <1,\\
	&\langle f\rangle_{\phi_B}=\int_X|f|\phi_B,
\end{align}
where  $f\in L_{loc}(X)$,  $A, B\in \ZB$ and $E\subset X$ is a measurable set.
The $\alpha$-oscillation functional $\OSC_{B,\alpha}(f)$ can be equivalently defined as follows: that is $\OSC_{B,\alpha}(f)=\inf_{a<b}(b-a)$, where the infimum is taken over all the numbers $a<b$ satisfying $\mu\{x\in B:\, f(x)\in [a,b]\}>\alpha\mu(B)$. Considering only intervals $[a,b]$ with $a=\INF_B(f)$, we similarly get the "lower" $\alpha$-oscillation $\LOSC_{B,\alpha}(f)$. Clearly we have 
\begin{equation}
	\OSC_{B,\alpha}(f)\le \LOSC_{B,\alpha}(f).
\end{equation}
In the sequel positive constants depending only our ball-basis $\ZB$ and the KB-couple $\ZG$ will be called admissible constants. The relation $a\lesssim b$ will stand for $a\le c\cdot b$, where $c>0$  is admissible. We write $a\sim b$ if the relations  $a\lesssim b$ and $b\lesssim a$ hold simultaneously. The notation $\log a$ will stand for $\log_2 a$.
\subsection{Main results}
Let $(X,\mu)$ be a measure space equipped with a ball-basis $\ZB$ and and suppose that $\ZG$ is a KB-couple. For any function $f\in L_{loc}(X)$ (locally integrable on $X$) consider the maximal operator
\begin{equation}\label{y6}
	\MM_{\ZG} f(x)=\sup_{B\in \ZB(x)}\langle f\rangle_{\phi_B}.
\end{equation}
In general, $\MM_{\ZG} (f)$ need not to be finite for all  locally integrable functions $f$, as well as it can also be non-measurable. We just note that in the above notations (in particular in $\OSC_{B,\alpha}$ and $\LOSC_{B,\alpha}$) one can allow to consider a non-measurable function $f$. Moreover, in some places below we will also need to integrate non-measurable functions and work with non-measurable function. For this purpose, in \sect{S2} we will give definitions of the outer measure and $L^p$-spaces of non-measurable functions. 

Let us now state the main result of the paper.
\begin{theorem}\label{T2}
	Let $\ZB$ be a doubling ball-basis and let $\omega:[1,\infty)\to [0,1]$ be a modulus of continuity, satisfying 
	\begin{equation}\label{y7}
		I(\omega)=1+\int_1^\infty\omega(t)\log(1+t)dt<\infty.
	\end{equation}
	If $f\in L_{loc}(X)$ and the maximal function $\MM (f)$ is not identically infinite, then for any ball $B\in \ZB$ with $\langle f\rangle^*_{\#,B}<\infty$, we have
	\begin{equation}\label{2}
	\LOSC_{B,\alpha}(\MM_{\ZG} (f))\lesssim I(\omega)(1-\alpha)^{-1}\langle f\rangle^*_{\#,B}
	\end{equation}
	for any $0<\alpha<1$. 
\end{theorem}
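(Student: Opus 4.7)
My plan is to produce, for each ball $B\in\ZB$ and each $\alpha\in(0,1)$, a subset $E\subset B$ with $\mu(E)>\alpha\mu(B)$, $\INF_E(\MM_\ZG f)=\INF_B(\MM_\ZG f)$, and
\begin{equation*}
\SUP_E(\MM_\ZG f)-\INF_B(\MM_\ZG f)\lesssim(1-\alpha)^{-1}I(\omega)\langle f\rangle^*_{\#,B}.
\end{equation*}
Using the doubling hypothesis I first build a chain $B=B_0\subsetneq B_1\subsetneq\cdots\subsetneq B_N$ with $\mu(B_{k+1})\in[2\mu(B_k),\eta\mu(B_k)]$, carried until $\mu(B_N)\gtrsim(1-\alpha)^{-1}\mu(B)$ (or $B_N^*=X$). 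Telescoping $|f_{B_{k+1}}-f_{B_k}|\lesssim\langle f\rangle^*_{\#,B}$ along this chain yields the auxiliary bound $\langle|f|\rangle_{B_k}\lesssim|f_B|+k\langle f\rangle^*_{\#,B}$.

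Next I split the supremum in $\MM_\ZG f(x)$ according to whether a test ball $B'\in\ZB(x)$ is \emph{large} (say $\mu(B')\ge\mu(B_1)$) or \emph{small}. For the large-scale piece the kernel decay K2 together with an annular decomposition against a doubling chain $B'=B'_0\subset B'_1\subset\cdots$ rooted at $B'$ gives
\begin{equation*}
\int|f|\phi_{B'}\lesssim\sum_{j\ge 0}2^j\omega(2^j)\langle|f|\rangle_{B'_j}\lesssim I(\omega)\langle f\rangle^*_{\#,B}+A(f,B),
\end{equation*}
where $A(f,B)$ is independent of $x$ and $B'$; here I use the asymptotic $I(\omega)\sim 1+\sum_{j\ge 0}2^j\omega(2^j)(1+j)$ to absorb the logarithmic $\BMO$-growth from the auxiliary bound. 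Because any large $B'\in\ZB(x)$ that contains $B$ is already a ball through $y$ as well, property G2 produces a matching large ball in $\ZB(y)$ of comparable measure, so the sup over large balls defines a function $\Phi(x)$ of oscillation $O(I(\omega)\langle f\rangle^*_{\#,B})$ on $B$.

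The small-scale contribution is dominated by $|f_{\widetilde B}|+\MM_\ZG\bigl((f-f_{\widetilde B})\ZI_{\widetilde B}\bigr)(x)$ for a fixed moderate enlargement $\widetilde B\supset B$ of comparable measure. From $\int_{\widetilde B}|f-f_{\widetilde B}|\lesssim\langle f\rangle^*_{\#,B}\mu(B)$ and the weak-type $(1,1)$ bound for $\MM_\ZG$ (from the Vitali-type covering argument built on B4), I get
\begin{equation*}
\mu\bigl\{x\in B:\MM_\ZG\bigl((f-f_{\widetilde B})\ZI_{\widetilde B}\bigr)(x)>\lambda\bigr\}\lesssim\lambda^{-1}\langle f\rangle^*_{\#,B}\mu(B).
\end{equation*}
Setting $\lambda=c_0(1-\alpha)^{-1}I(\omega)\langle f\rangle^*_{\#,B}$ makes the exceptional set $F$ satisfy $\mu(F)<(1-\alpha)\mu(B)/2$. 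On $E_0=B\setminus F$ the two regimes combine to place $\MM_\ZG f(x)$ in $[\Phi(x)-\lambda,\Phi(x)+\lambda]$, so $\OSC_{E_0}(\MM_\ZG f)$ already satisfies the target bound.

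The main obstacle I foresee is the $\LOSC$-specific constraint $\INF_E(\MM_\ZG f)=\INF_B(\MM_\ZG f)$: the weak-type truncation defining $F$ can in principle discard the level set on which the essential infimum of $\MM_\ZG f$ is attained, since that set may be arbitrarily small. I would bridge this by enlarging $E_0$ to $E=E_0\cup L_\epsilon$, where $L_\epsilon=\{x\in B:\MM_\ZG f(x)\le\INF_B(\MM_\ZG f)+\epsilon\}$ for arbitrarily small $\epsilon>0$. By construction the added points satisfy $\MM_\ZG f\le\INF_B(\MM_\ZG f)+\epsilon$, so $\SUP_E(\MM_\ZG f)$ is not raised beyond the target; $L_\epsilon$ has positive measure by definition of essential infimum (using that $\MM_\ZG f\not\equiv\infty$ together with $\langle f\rangle^*_{\#,B}<\infty$ forces $\INF_B(\MM_\ZG f)<\infty$ via the large-scale estimate), hence $\mu(E)\ge\mu(E_0)>\alpha\mu(B)$ and $\INF_E(\MM_\ZG f)=\INF_B(\MM_\ZG f)$ as required. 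A secondary technical point is to keep the constants tight through the large-scale annular sum so that the final dependence on $I(\omega)$ is linear rather than through a worse function.
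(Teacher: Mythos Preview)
Your strategy parallels the paper's (split test balls by scale, weak-$(1,1)$ for the local piece, G2-comparison for the far piece), but the small-scale step has a gap that propagates. The claim that the small-scale supremum is dominated by $|f_{\widetilde B}|+\MM_\ZG\bigl((f-f_{\widetilde B})\ZI_{\widetilde B}\bigr)(x)$ ignores the kernel tail: for small $B'\in\ZB(x)$ the function $\phi_{B'}$ is \emph{not} supported in $\widetilde B$, so $\langle|f|\rangle_{\phi_{B'}}$ genuinely depends on $f$ outside $\widetilde B$. That tail can be controlled by $I(\omega)\langle f\rangle^*_{\#,B}$ via the same annular estimate you use for the large scales, but it does not come for free. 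Even once it is handled, your box containment $\MM_\ZG f(x)\in[\Phi(x)-\lambda,\Phi(x)+\lambda]$ on $E_0$ still fails: the small-scale piece can reach $|f_{\widetilde B}|+\lambda+O(I(\omega)\langle f\rangle^*_{\#,B})$, which exceeds $\Phi(x)+\lambda$ unless you also prove $\Phi(x)\ge|f_{\widetilde B}|-O(I(\omega)\langle f\rangle^*_{\#,B})$. You have only established that $\Phi$ has small \emph{oscillation}, not this additive lower bound. Consequently your $L_\epsilon$-patch is not enough either, since it needs $\SUP_{E_0}(\MM_\ZG f)-\INF_B(\MM_\ZG f)$ bounded, not merely $\OSC_{E_0}(\MM_\ZG f)$.

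The paper avoids all of this at once. First it dominates $\MM_\ZG f(x)\lesssim\MM f(x)$ pointwise (the annular kernel bound gives $\langle|f|\rangle_{\phi_{B'}}\lesssim\MM f(x)$ for any $B'\ni x$), so the near-optimal test object at $x$ becomes a \emph{plain} ball $A\ni x$ with $\MM_\ZG f(x)\lesssim\langle f\rangle_A+\delta$; when $\mu(A)\le\mu(B)$ one has $A\subset B^*$ and truncation by $\ZI_{B^*}$ loses nothing, so the weak-type step is run with $g=(f-f_{\phi_B})\ZI_{B^*}$ and the ordinary maximal function $\MM g$. Second, instead of boxing $\MM_\ZG f$ around $\Phi$, the paper bounds $\MM_\ZG f(x)-\MM_\ZG f(x')$ for $x$ in the good set and $x'\in B$ \emph{arbitrary}: it lower-bounds $\MM_\ZG f(x')\ge\langle f\rangle_{\phi_{\bar B}}$ for a single well-chosen $\bar B\in\ZB(x')$ (via G2, with $\bar B\supset A^*\supset B$ in the large case and $\bar B\supset B^*$ in the small case), and then uses the normalization $\int\phi_{\bar B}=1$ to obtain the exact cancellation
\[
\langle f\rangle_A-\langle f\rangle_{\phi_{\bar B}}\le\langle f-c\rangle_A+|c|-\bigl(|c|-\langle f-c\rangle_{\phi_{\bar B}}\bigr)=\langle f-c\rangle_A+\langle f-c\rangle_{\phi_{\bar B}}
\]
for any constant $c$. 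With $c=f_{\phi_{\bar B}}$ (large case) or $c=f_{\phi_B}$ (small case) both terms are $\lesssim I(\omega)\langle f\rangle^*_{\#,B}$ by the kernel-oscillation lemma, and taking the infimum over $x'$ yields $\SUP_F(\MM_\ZG f)-\INF_B(\MM_\ZG f)$ bounded directly --- the $\LOSC$ constraint is then automatic with no $L_\epsilon$ enlargement needed.
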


\begin{remark}
	An inequality like \e{2} for the standard maximal function 
	\begin{equation}\label{1-1}
		\MM f(x)=\sup_{B\in \ZB:\, x\in B}\langle f\rangle_B=\sup_{B\in \ZB:\, x\in B} \frac{1}{\mu(B)}\int_B|f|
	\end{equation}
	was proved in \cite{Kar1}, where instead of the lower oscillation $\LOSC_{B,\alpha}(\MM(f))$ the standard oscillation $\OSC_{B,\alpha}(\MM(f))$ was considered.
\end{remark}
\begin{corollary}\label{KC2}
		Let $\ZB$ be a doubling ball-basis and suppose that $\omega:[1,\infty)\to [0,1]$ is a modulus of continuity, satisfying \e{y7}. If $f\in \BMO(X)$, and $\MM(f)$ is not identically infinite, then $\MM_\ZG(f)\in \BLO(X)$ and 
	\begin{equation}
		\|\MM_{\ZG}(f)\|_{\BLO(X)}\lesssim I(\omega)\|f\|_{\BMO(X)}.
	\end{equation}
Moreover, if $\mu(X)<\infty$, then we can say $\MM_\ZG$ boundedly maps $\BMO(X)$ into $\BLO(X)$.
\end{corollary}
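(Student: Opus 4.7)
The plan is to derive the $\BLO$ estimate directly from Theorem~\ref{T2} via a John--Nirenberg-style iteration. Writing $g:=\MM_{\ZG}(f)$, I first note that $\langle f\rangle^*_{\#,B}\le\|f\|_{\BMO(X)}$ for every ball $B\in\ZB$, since the supremum in \eqref{x5} ranges over balls $A\supset B$ and each $\langle f\rangle_{\#,A}$ is bounded by the $\BMO$ seminorm. Theorem~\ref{T2} therefore gives
\begin{equation*}
\LOSC_{B,\alpha}(g)\lesssim I(\omega)(1-\alpha)^{-1}\|f\|_{\BMO(X)},\qquad 0<\alpha<1,
\end{equation*}
on every such $B$. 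With $m:=\INF_B(g)$, unpacking the definition of $\LOSC$ translates this into the weak-type distributional bound $\mu\{x\in B:\, g(x)-m>t\}\lesssim I(\omega)\|f\|_{\BMO(X)}\mu(B)/t$.

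Direct integration of this weak bound in $t$ diverges, so the key step is to upgrade it to exponential decay by iterating Theorem~\ref{T2} on a stopping family of sub-balls. Taking $\alpha=1/2$ produces a set $E_0\subset B$ with $\mu(E_0)>\mu(B)/2$, $\INF_{E_0}(g)=m$, and $g\le m+C_0$ on $E_0$, where $C_0\sim I(\omega)\|f\|_{\BMO(X)}$. Using the hull-ball axiom~B4 and the doubling condition~\eqref{h73}, I would perform a Calder\'on--Zygmund-type decomposition of the exceptional set $\{x\in B:\, g(x)>m+C_0\}$ into maximal sub-balls $B_j\subset B^*$ on which the infimum of $g$ has been raised above $m+C_0$. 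Re-applying Theorem~\ref{T2} at $\alpha=1/2$ on each $B_j$ (whose sharp quantity $\langle f\rangle^*_{\#,B_j}$ is still bounded by $\|f\|_{\BMO(X)}$) and iterating $k$ times should yield
\begin{equation*}
\mu\{x\in B:\, g(x)-m>kC_0\}\le 2^{-k}\mu(B),\qquad k=0,1,2,\ldots,
\end{equation*}
and integrating the resulting exponential tail gives $\frac{1}{\mu(B)}\int_B(g-m)\,d\mu\lesssim C_0$. Taking the supremum over $B$ delivers $\|\MM_{\ZG}(f)\|_{\BLO(X)}\lesssim I(\omega)\|f\|_{\BMO(X)}$.

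For the finite-measure assertion, I would apply the John--Nirenberg inequality~\eqref{JN} (valid in the abstract ball-basis setting by the same exponential-decay argument) to place $f\in L^p(X)$ for every $p<\infty$ when $\mu(X)<\infty$, and then invoke the $L^p$-boundedness of $\MM_{\ZG}$ that follows from the pointwise kernel bound~\eqref{y4} to conclude $\MM_{\ZG}(f)<\infty$ almost everywhere. This removes the hypothesis on $\MM(f)$ and yields genuine boundedness $\BMO(X)\to\BLO(X)$. The main obstacle I expect is the stopping-time construction: one must use axiom~B4 together with doubling~\eqref{h73} carefully to extract sub-balls on which the infimum of $g$ has risen by at least a fixed constant multiple of $C_0$, so that Theorem~\ref{T2} can be re-applied meaningfully. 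Once that decomposition is in place, the remaining manipulations reduce to standard distribution-function calculus.
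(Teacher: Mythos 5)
Your high-level plan --- upgrade the $\LOSC$ estimate of Theorem~\ref{T2} to exponential tail decay and then integrate --- is the right one, but the mechanism you sketch for the iteration does not exist in the ball-basis setting, and the obstacle you yourself flag (the stopping-time construction) is exactly where the argument breaks. In the paper, Corollary~\ref{KC2} is \emph{not} proved by an inline Calder\'on--Zygmund iteration on $g=\MM_\ZG(f)$. Instead, Theorem~\ref{T2} together with $\langle f\rangle^*_{\#,B}\le\|f\|_\BMO$ gives $\|\MM_\ZG(f)\|_{\BLO_\alpha}\lesssim I(\omega)\|f\|_\BMO$, and then Theorem~\ref{T4} --- the equivalence $\BLO_\alpha(X)=\BLO(X)$ with comparable norms --- converts this into the genuine $\BLO$ bound. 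Theorem~\ref{T4} rests on Proposition~\ref{P}, whose engine is \emph{not} a Calder\'on--Zygmund decomposition: it uses the density property (Lemma~\ref{L12}) to surround each point of the bad set $\{|g|>\lambda\}$ by a ball $G(x)$ on which that bad set has density pinned from above \emph{and} below (via the doubling growth of Lemma~\ref{L5}), then the Vitali-type covering lemma (Lemma~\ref{L1-1}) to extract a disjoint subfamily $B_k$, and finally the \emph{regularity} condition of Definition~\ref{D3} to guarantee that the large-measure low-oscillation set $E_k\subset B_k^*$ furnished by the $\BMO_\alpha$ definition meets the complement of the bad set inside $B$, supplying a reference point $x_k$ with $|g(x_k)|\le\lambda$.

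Two concrete defects of your sketch. First, ``maximal sub-balls on which the infimum of $g$ has been raised above $m+C_0$'' is not a well-posed stopping criterion: in a general ball-basis there is no nesting, hence no canonical maximal family, and even if one extracts a disjoint family by B4) and doubling, nothing in the selection bounds $\INF_{B_j}(g)$ \emph{from above} by $m+O(C_0)$. In the classical John--Nirenberg argument, maximality controls the \emph{average} on the stopping cube through the parent cube; for infima there is no analogous control, so the inductive step $\mu\{g-m>(k+1)C_0\}\le\tfrac12\,\mu\{g-m>kC_0\}$ does not close. Second, you never invoke regularity (Definition~\ref{D3}), which the paper explicitly marks --- in the remark following the corollary --- as indispensable for Theorems~\ref{T4}, \ref{T11}, and Corollary~\ref{KC2}; any repaired version of your covering step would need it. Your treatment of the finite-measure case is sound in outline ($\mu(X)<\infty$ gives $X\in\ZB$ by Lemma~\ref{L0}, the abstract John--Nirenberg inequality places $f-f_X\in L^p(X)$, and the pointwise bound of Lemma~4.5 gives $\MM_\ZG f\lesssim\MM f$, whence $\MM f<\infty$ a.e.), but it presupposes the abstract John--Nirenberg inequality, which is again exactly the content of Proposition~\ref{P}.
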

\begin{remark}
	It is an open question whether condition \e{y7} in the statements either of Theorems \ref{T1}, \ref{T2}  or  \cor{KC2} can be replaced by $\int_1^\infty\omega(t)dt<\infty$.
\end{remark} 
\cor{KC2} is obtained by a combination of \trm{T2} and the following John-Nirenberg type theorems. For those statements,  first recall the definitions of spaces $\BMO_\alpha$, $\BLO_\alpha$, $\BMO$ and $\BLO$, which are the families of functions $f\in L^0(X)$ with the norms
\begin{align}
	&\|f\|_{\BMO_\alpha}=\sup_{B\in \ZB}\OSC_{B,\alpha}(f)<\infty,\\
	&\|f\|_{\BLO_\alpha}=\sup_{B\in \ZB}\LOSC_{B,\alpha}(f)<\infty,\\
	&\|f\|_{\BMO}=\sup_{B\in \ZB}\langle f\rangle_{\#,B}=\sup_{B\in \ZB}\langle f-f_B\rangle_{B}<\infty,\\
	&\|f\|_{\BLO}=\sup_{B\in \ZB}\langle f-\INF_B(f)\rangle_{B}<\infty,
\end{align}
respectively. The following relations between these norms can be easily checked:
\begin{align*}
	&\|f\|_{\BMO_\alpha}\le\|f\|_{\BLO_\alpha},\quad  \|f\|_{\BMO}\le 2\|f\|_{\BLO}\\
	&\|f\|_{\BMO_\alpha}\le 2(1-\alpha)^{-1}\|f\|_\BMO,\\
	&\|f\|_{\BLO_\alpha}\le 2(1-\alpha)^{-1}\|f\|_\BLO,
\end{align*}
whereas the followings are more delicate properties of these norms.
\begin{theorem}\label{T4}
For an admissible constant $\alpha\in (1/2,1)$ we have  $ \BLO_\alpha(X)= \BLO(X)$. Moreover, for any $f\in \BLO_\alpha(X)$,
	\begin{equation}\label{x6}
		\|f\|_{\BLO}\sim	\|f\|_{\BLO_\alpha}.
	\end{equation}
\end{theorem}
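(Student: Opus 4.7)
The easy direction $\|f\|_{\BLO_\alpha} \le 2(1-\alpha)^{-1}\|f\|_{\BLO}$ already appears in the list of relations that the paper flags as easily checked, so the real content of \trm{T4} is the reverse bound $\|f\|_{\BLO} \lesssim \|f\|_{\BLO_\alpha}$ for some admissible $\alpha \in (1/2,1)$. The plan is a John--Nirenberg-type argument on the distribution function of $f - \INF_B f$. Setting $K := \|f\|_{\BLO_\alpha}$ and fixing a ball $B$ with $m := \INF_B f$, I aim to establish the exponential decay
\[
\mu\{x \in B : f(x) - m > \lambda\} \le c_1 e^{-c_2 \lambda / K}\mu(B),\qquad \lambda > 0,
\]
from which $\langle f - m\rangle_B \lesssim K$, and hence $\|f\|_{\BLO} \lesssim K$, follow by integrating the tail.

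The base case at level $\lambda = K$ is immediate from the definition of $\LOSC_{B,\alpha}$: up to an arbitrarily small $\varepsilon$, one can choose $E \subset B$ with $\mu(E) > \alpha\mu(B)$, $\INF_E f = m$, and $\OSC_E f \le K$, so $f \le m + K$ on $E$ and the bad set $F_K := \{f - m > K\}\cap B$ has measure at most $(1-\alpha)\mu(B)$. The inductive step from $F_\lambda$ to $F_{\lambda+K}$ is a stopping-time covering: for each $x \in F_\lambda$ I would extract a ball $A \ni x$, contained in a bounded enlargement of $B$, that is maximal subject to $\INF_A f \le m + \lambda$. The doubling condition \e{h73} of \df{DD} controls parents during the selection, while the hull-ball axiom B4) together with regularity (\df{D3}) supplies a Vitali-type extraction of a family $\{A_j\}$ that covers $F_\lambda$ up to a null set and satisfies $\sum_j \mu(A_j) \le C\mu(F_\lambda)$ with $C$ admissible. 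On each $A_j$ the base case, applied with $\INF_{A_j} f$ in place of $m$, yields $\mu(\{f > \INF_{A_j} f + K\}\cap A_j) \le (1-\alpha)\mu(A_j)$; since $\INF_{A_j} f \le m + \lambda$, this set contains $F_{\lambda+K}\cap A_j$, and summing over $j$ produces the one-step decay
\[
\mu(F_{\lambda+K}) \le C(1-\alpha)\,\mu(F_\lambda).
\]
Choosing $\alpha$ close enough to $1$ so that $\beta := C(1-\alpha) < 1$ and iterating on $\lambda = nK$ yields the claimed exponential decay.

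The main obstacle is the stopping-time / covering step. In $\ZR^d$ one would simply invoke the Calder\'on--Zygmund decomposition, but in the abstract doubling ball-basis setting the maximal selection and the Vitali-type disjointification must be built purely from the axioms B3), B4), \df{DD} and \df{D3}, keeping careful track of how the stopping balls interact with $B$ and its enlargements. This is also precisely why $\alpha$ must be \emph{admissible} rather than arbitrary in $(1/2,1)$: the overlap constant $C$ depends only on the basis parameters $\ZK$, $\eta$ and $\theta$, and the threshold above which the equivalence holds is exactly the one dictated by the requirement $C(1-\alpha) < 1$.
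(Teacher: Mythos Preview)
Your overall strategy matches the paper's exactly: obtain a one-step decay $\mu(F_{\lambda+K})\le c\,\mu(F_\lambda)$ by a covering argument and iterate to exponential decay. The paper isolates the covering step as \pro{P}, stated for $\BMO_\alpha$, and then applies it to $g=f-\INF_B(f)$ via the trivial inequality $\|f\|_{\BMO_\alpha}\le \|f\|_{\BLO_\alpha}$; your base case \e{a40} is literally the starting estimate in the paper's proof.

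There is, however, a concrete problem with your proposed stopping rule. The criterion ``$A\ni x$ maximal subject to $\INF_A f\le m+\lambda$'' is monotone in the wrong direction: enlarging $A$ can only decrease $\INF_A f$, so once any ball satisfies the condition, every larger ball does too, and the ``maximal'' choice degenerates to $B$ (or its hull). This selection therefore cannot produce the needed control $\sum_j\mu(A_j)\lesssim \mu(F_\lambda)$. The paper's \pro{P} avoids this by selecting balls through the \emph{density} of a measurable hull $F\supset F_\lambda$: for each $x\in F$ one uses the density lemma (\lem{L12}) and doubling to find a ball $G(x)$ with $\mu(G(x)\cap F)/\mu(G(x))$ trapped in a fixed window. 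The lower bound on the density is what yields $\sum_j\mu(G_j)\lesssim \mu(F)$ after the Vitali-type extraction of \lem{L1-1}; the upper bound on the density of $G^*(x)$, combined with regularity, forces a point of the good set inside $G^*(x)$, and then the $\alpha$-oscillation control on $G^*(x)$ gives the decay. That density-based selection is precisely the ingredient you would need to replace your infimum-based one.
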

\begin{theorem}\label{T11}
There is an admissible constant $\alpha\in (1/2,1)$ such that  $ \BMO_\alpha(X)= \BMO(X)$. Moreover, for any $f\in \BMO_\alpha(X)$ we have $\|f\|_{\BMO}\sim	\|f\|_{\BMO_\alpha}$.
\end{theorem}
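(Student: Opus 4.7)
The nontrivial direction is $\|f\|_\BMO\lesssim\|f\|_{\BMO_\alpha}$, since the inequality $\|f\|_{\BMO_\alpha}\le 2(1-\alpha)^{-1}\|f\|_\BMO$ is already recorded just above the theorem. My plan is to prove a John--Nirenberg type exponential decay: for an admissible $\alpha\in(1/2,1)$, setting $K=\|f\|_{\BMO_\alpha}$, I would produce, for each ball $B$, a constant $c_B$ and admissible $c_1,c_2>0$ such that
\[
\mu\{x\in B:\,|f(x)-c_B|>\lambda K\}\le c_1\mu(B)\,2^{-c_2\lambda},\qquad \lambda>0.
\]
Integrating in $\lambda$ yields $\int_B|f-c_B|\,d\mu\lesssim K\mu(B)$, and hence $\langle f\rangle_{\#,B}\le 2\mu(B)^{-1}\int_B|f-c_B|\,d\mu\lesssim K$, which is exactly the required $\BMO$ bound.

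The decay itself will come from a stopping-time / Calder\'on--Zygmund style iteration inside $B$. At generation $0$, using $\OSC_{B,\alpha}(f)\le K$, pick $E_0\subset B$ with $\mu(E_0)>\alpha\mu(B)$ and $\OSC_{E_0}(f)\le K+\varepsilon$, and take $c_B$ to be the essential infimum of $f$ on $E_0$; then $|f-c_B|\le K+\varepsilon$ on $E_0$, and the bad set $B\setminus E_0$ has measure $<(1-\alpha)\mu(B)$. Using property B3) together with the hull-ball axiom B4), I would cover (up to a null set) the bad set by a family of balls $\{A_j^{(1)}\}\subset\ZB$ with $\sum_j\mu(A_j^{(1)})\lesssim(1-\alpha)\mu(B)$, each $A_j^{(1)}$ contained in a uniformly bounded enlargement of $B$ and still satisfying $\mu(A_j^{(1)}\cap E_0)\gtrsim\mu(A_j^{(1)})$, so that the local reference constant stays close to $c_B$. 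Reapplying the construction on every $A_j^{(1)}$ via $\OSC_{A_j^{(1)},\alpha}(f)\le K$ produces the next generation of sub-balls, and iterating produces a generation tree whose generation-$n$ bad sets have total measure $\le (C(1-\alpha))^n\mu(B)$ for some admissible $C$. Choosing $\alpha\in(1/2,1)$ close enough to $1$ that $C(1-\alpha)<1$ forces geometric decay $\theta^n\mu(B)$ with $\theta<1$. Since the local reference constants $c_{A_j^{(n)}}$ drift from $c_B$ by at most a fixed multiple of $K$ per step, the set $\{x\in B:\,|f(x)-c_B|>\lambda K\}$ sits inside the union of generation-$n$ bad balls for $n\gtrsim\lambda$, yielding the claimed exponential bound.

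The main obstacle is the covering step together with the control of the reference constants along the iteration tree. Unlike in $\ZR^d$, where a dyadic or Whitney decomposition provides a clean Calder\'on--Zygmund family, here only B1)--B4) are available, and one must organize the covering of $B\setminus E_0$ so that (i) the total measure of the covering balls is comparable to $\mu(B\setminus E_0)$, (ii) every covering ball lies inside a fixed enlargement of $B$ (so the iteration does not leak to larger scales), and (iii) each covering ball still has substantial overlap with the previous good set $E_0$. The threshold $\alpha>1/2$ is essential precisely for (iii): any two $\alpha$-large subsets of a common ball must intersect, so as soon as $\mu(A_j^{(1)}\cap E_0)$ is comparable to $\mu(A_j^{(1)})$, the local good set $E_j^{(1)}\subset A_j^{(1)}$ meets $E_0$, forcing $|c_{A_j^{(1)}}-c_B|\lesssim K$ by an intersection point. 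Making this quantitative and uniformly iterable, so that the constants $\theta,C,c_1,c_2$ remain admissible, is where the bulk of the technical work lies.
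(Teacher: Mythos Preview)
Your overall strategy---pick a centre $c_B$, prove exponential decay of the level sets $\{|f-c_B|>\lambda K\}$ inside $B$, integrate---matches the paper's. The gap is in the covering step, and it is real. You invoke only B3) and B4) to produce balls $A_j^{(1)}$ covering $B\setminus E_0$ with (i) $\sum_j\mu(A_j^{(1)})\lesssim(1-\alpha)\mu(B)$, (ii) containment in a bounded enlargement of $B$, and (iii) $\mu(A_j^{(1)}\cap E_0)\gtrsim\mu(A_j^{(1)})$. But B3) only approximates a set by a union of balls up to small symmetric difference; it gives no control on overlap, on individual sizes, or on intersection with $E_0$. Getting (i) and (iii) simultaneously forces a density/stopping-time construction: start at density points of the bad set (this is \lem{L12}, itself a nontrivial consequence of B3)) and ascend through larger balls via the \emph{doubling} condition until the bad-set density first drops below a fixed threshold---this is why the constant $\beta$ from \lem{L5} appears in \e{x74}. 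More seriously, you never use the \emph{regularity} condition (\df{D3}), and the paper explicitly flags it as essential for this theorem: without it, even balls $A_j$ with $\mu(A_j)\le\mu(B)$ and $A_j\cap B\neq\varnothing$ may have hulls $A_j^*$ lying almost entirely outside $B$, so the iteration leaks and the $\alpha$-good subset of $A_j^{(1)}$ need not meet $E_0$ at all, breaking the linkage of reference constants that your drift argument relies on.

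The paper avoids the tree of sub-balls entirely by proving a direct halving inequality (\pro{P}): once $\mu^*\{x\in B:|f-a_B|>\lambda\}$ is below a fixed fraction of $\mu(B)$, it halves when $\lambda$ is increased by $\|f\|_{\BMO_\alpha}$. The proof covers the \emph{level set} itself (not its complement) by balls chosen via the density-plus-doubling stopping time just described, so that the level set has moderate density in each; regularity then forces each hull $B_k^*$ to sit in $B$ with proportion at least $\theta$, which combined with the density bound guarantees that the $\alpha$-good set $E_k\subset B_k^*$ contains a point of $B$ outside the level set, pinning $|f-a_B|\le\lambda+\|f\|_{\BMO_\alpha}$ on all of $E_k$. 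Iterating this halving on the fixed ball $B$ yields the exponential decay with no tree, no drift of constants, and no leakage. If you supply doubling and regularity and carry out the covering via density stopping times, your plan essentially reconstructs \pro{P}; as written, it underestimates the hypotheses needed.
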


\begin{remark}
	\trm{T11} is an extension of the analogous result on $\ZR^d$ proved in \cite{John, Str}. As for \trm{T4}, to the best of our knowledge it is new even in the classical situations. The proofs of Theorems \ref{T4} and \ref{T11} are based on \pro{P} (see \sect{S5}), which is itself interesting. In particular, from \pro{P} it follows that:
if $f\in \BMO(\ZR^d)$, then for any ball $B\subset \ZR^d$,
\begin{align}
	|\{x\in B:\, |f(x)-f_B|>&c(n+1)\|f\|_\BMO\}|\\
	&\le \frac{1}{2}\cdot |\{x\in B:\, |f(x)-f_B|>cn\|f\|_\BMO\}|\label{x57}
\end{align}
where $c>0$ is an absolute constant. Clearly, this immediately implies the classical John-Nirenberg inequality \e{JN},  but the converse statement is not true at all. Inequalities like \e{x57} can also be stated for functions in $\BMO_\alpha$, $\BLO_\alpha$ and $\BLO$. In the case of $\BLO$ one just need to replace $f_B$ by $\INF_B(f)$ in \e{x57}.  The $\BLO$ version of \e{x57} implies also a John-Nirenberg type inequality for $\BLO$ spaces. The latter for the ball-basis of cubes in $\ZR^d$ was recently proved by D.~Wang et al. in \cite{Wang}.
\end{remark}
\begin{remark}
	G.~Dafni et al. in \cite{DaGi} considered Hardy-Littlewood type maximal function \e{HL} over families of engulfing bases $\ZF$ (introduced in \cite{DG}) defined as follows: 
	\begin{enumerate}
		\item for each $B\in \ZF$ there is a $\tilde B\in \ZF$ such that $|\tilde B|\le c |B|$, 
	\item for each $B\in \ZF$, with $\tilde B$ chosen in (i), if $G\in \ZF$ satisfies $G\cap B=\varnothing$, $G\cap \tilde B^c=\varnothing$, then there exists a $\bar G\in \ZF$ such that $\bar G\supset \tilde B\cup G$ and $|\bar G|\le c|G|$. 
	\end{enumerate}
It was proved in \cite{DaGi} that such a maximal function maps boundedly $\BMO$ into $\BLO$, where both spaces are defined with respect to the basis $\ZF$. Clearly families of classical balls (cubes) as well as dyadic cubes in $\ZR^d$ form engulfing bases. Moreover, one can check that doubling ball-bases defined above are engulfing basis too. In this paper the consideration of ball-bases enables obtaining $\BMO-\BLO$ boundedness for convolution type maximal operators. 
\end{remark}
\begin{remark}
	Note that for the proof of \trm{T2} condition B3) as well us the regularity condition (see \df {D3}) of the ball-basis are not used, but those are significant in the proofs of Theorems \ref{T4}, \ref{T11} and  \cor{KC2}. 
\end{remark}

\section{Outer measure and $L^p$-norms of non-measurable functions}\label{S1}
It is well known for classical examples of ball-bases that the union of any collections of balls is measurable and such a property is important to ensure measurability of certain maximal operators. For general ball-bases non-countable union of balls need not to be measurable. So in some cases we will need to use outer measure generated by the basic measure $\mu$. Hence, let $(X,\ZM,\mu)$ be a measure space. Define the outer measure of a set $E\subset X$ by
\begin{equation*}
	\mu^*(E)=\inf_{F\in \ZM:\, F\supset E}\mu(F).
\end{equation*}
For any function $f\in L^0(X)$ we denote
\begin{align*}
	&G_f(t)=\{x\in X:\, |f(x)|>t\},\quad t\ge 0,\\
	& \lambda_f(t)=\mu^*\left(G_f(t)\right).
\end{align*}
Observe that $\lambda_f(t)$ is a distribution function, i.e. it is right-continuous. 
For arbitrary $f\in L^0(X)$ we define
\begin{align}
	&\|f\|_{L^p}=\left(p\int_0^\infty t^{p-1}\lambda_f(t)dt\right)^{1/p},\label{r52}\\
	&\|f\|_{L^{p,\infty}}=\sup_{t>0}t(\lambda_f(t))^{1/p}.\label{r54}
\end{align}
One can easily check that the standard triangle and  H\"{o}lder inequalities as well as the Marcinkiewicz interpolation theorem hold in such setting of $L^p$ norms. We will use below weak-$L^1$ bound of the standard maximal function \e{1-1}.
\begin{theorem}[\cite{Kar3}, Theorem 4.1]\label{T1-1}
	The maximal function \e {1-1} satisfies weak-$L^1$ inequality. Namely, 
	\begin{equation}\label{h38}
		\mu^*\{x\in X:\,\MM f(x)>\lambda\}|\lesssim \|f\|_{L^1(X)}/\lambda,\quad \lambda>0.
	\end{equation} 
\end{theorem}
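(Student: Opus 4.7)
The plan is an abstract Vitali-type covering argument in which B4) plays the role of the Euclidean ``$5B$-lemma.'' Assume $f\in L^1(X)$ (otherwise the estimate is trivial), fix $\lambda>0$, and set $E_\lambda=\{x\in X:\MM f(x)>\lambda\}$, $M_0=\|f\|_{L^1(X)}/\lambda$. For each $x\in E_\lambda$ pick a ball $B_x\in\ZB$ with $x\in B_x$ and $\int_{B_x}|f|>\lambda\mu(B_x)$; then $\mu(B_x)<M_0$. Write $\mathcal F=\{B_x:x\in E_\lambda\}$ for the resulting cover. If one can produce a countable disjoint subfamily $\{B_k\}\subset \mathcal F$ with $\bigcup_{B\in\mathcal F}B\subset\bigcup_k B_k^*$, then B4) combined with the defining inequality for each $B_k$ gives
\begin{equation*}
\mu^*(E_\lambda)\le\mu\Bigl(\bigcup_k B_k^*\Bigr)\le\sum_k\mu(B_k^*)\le \ZK\sum_k\mu(B_k)<\frac{\ZK}{\lambda}\sum_k\int_{B_k}|f|\le\frac{\ZK}{\lambda}\|f\|_{L^1(X)},
\end{equation*}
which is precisely \e{h38}.

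To build such a subfamily I would stratify by dyadic measure layers
\begin{equation*}
\mathcal F_k=\bigl\{B\in\mathcal F:\,2^{-k}M_0<\mu(B)\le 2^{-k+1}M_0\bigr\},\qquad k=0,1,2,\ldots,
\end{equation*}
and inside each layer extract by Zorn's lemma a maximal pairwise disjoint subfamily $\mathcal G_k\subset\mathcal F_k$. The observation that makes the selection automatically countable, without any separability hypothesis on $X$, is that for any disjoint subfamily of $\mathcal F$ one has
\begin{equation*}
\sum_{B\in\mathcal G_k}\mu(B)<\lambda^{-1}\sum_{B\in\mathcal G_k}\int_B|f|\le M_0,
\end{equation*}
and since each $\mu(B)>2^{-k}M_0$ this forces $\#\mathcal G_k<2^k$. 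Thus $\mathcal G=\bigcup_k\mathcal G_k$ is at most countable.

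For the covering property, take any $B\in\mathcal F_k$; by maximality of $\mathcal G_k$ it meets some $B'\in\mathcal G_k$, and since $\mu(B)\le 2^{-k+1}M_0<2\mu(B')$, property B4) yields $B\subset(B')^*$. Consequently $\bigcup_{B\in\mathcal F}B\subset\bigcup_{B'\in\mathcal G}(B')^*$, and the display of the first paragraph closes the proof. The only delicate point is that $E_\lambda$ need not be measurable; this is handled automatically by the outer measure, because $\bigcup_{B'\in\mathcal G}(B')^*$ is a countable union of measurable sets, so $\mu^*(E_\lambda)\le\mu\bigl(\bigcup_{B'\in\mathcal G}(B')^*\bigr)$ follows directly from the definition of $\mu^*$ recalled in \sect{S1}. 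The main obstacle is thus purely organizational --- arranging the greedy selection so that countability comes for free --- while the substantive geometric input is precisely the hull-ball property B4).
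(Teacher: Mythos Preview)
The paper does not actually prove this theorem here; it is quoted from \cite{Kar3}. The natural in-paper route would be to apply \lem{L1-1} (the greedy Vitali selection) to $E_\lambda\cap G$ for a fixed ball $G$ and then exhaust $X$ via \lem{L5}. Your approach is genuinely different: you bypass the boundedness hypothesis of \lem{L1-1} by exploiting the a priori bound $\mu(B_x)<M_0$ coming from the $L^1$ norm, and you select inside dyadic measure layers rather than greedily by size.

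However, there is a real gap. You extract, for each $k$ \emph{independently}, a maximal disjoint family $\mathcal G_k\subset\mathcal F_k$, and then set $\mathcal G=\bigcup_k\mathcal G_k$. Nothing prevents a ball in $\mathcal G_1$ from meeting a ball in $\mathcal G_2$, so $\mathcal G$ need not be pairwise disjoint. But the last inequality in your opening display, $\sum_k\int_{B_k}|f|\le\|f\|_{L^1(X)}$, uses exactly this disjointness. Without it the sum over all layers is only bounded by $\sum_k M_0=\infty$, and the estimate collapses.

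The fix is to process the layers in order of decreasing measure and, at stage $k$, choose $\mathcal G_k\subset\mathcal F_k$ maximal among families disjoint from each other \emph{and} from the already-selected $\mathcal G_1\cup\cdots\cup\mathcal G_{k-1}$. Then $\mathcal G$ is pairwise disjoint by construction; countability still follows since $\sum_{B\in\mathcal G}\mu(B)<M_0$ and each layer has the lower bound $\mu(B)>2^{-k}M_0$; and the covering property survives because any $B\in\mathcal F_k$ must, by maximality, meet some $B'\in\mathcal G_j$ with $j\le k$, whence $\mu(B)\le 2^{-k+1}M_0\le 2\cdot 2^{-j}M_0<2\mu(B')$ and B4) gives $B\subset (B')^*$.
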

\section{Preliminary properties of ball bases}\label{S2}
Some of lemmas proved in this section are versions of similar statements from \cite{Kar3, Kar1}. We find necessary to state those proof completely.  Let $\ZB$ be a ball-basis in the measure space $(X,\mu)$. From B4) condition it follows that if balls $A,B$ satisfy $A\cap B\neq\varnothing$, $\mu(A)\le 2\mu(B)$, then $A\subset B^*$. This property will be called two balls relation. We say a set $E\subset X$ is bounded if $E\subset B$ for a ball $B\in\ZB$.
\begin{lemma}\label{L0}
	If $(X,\mu)$ is a measure space equipped with a ball-basis $\ZB$ and $\mu(X)<\infty$, then $X\in \ZB$.
\end{lemma}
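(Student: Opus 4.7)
The plan is to exhibit $X$ itself as a hull-ball $B_0^*$ for a suitably chosen $B_0\in\ZB$. Since $\mu(X)<\infty$, the quantity $M=\sup_{B\in\ZB}\mu(B)$ is finite and at most $\mu(X)$, so by definition of the supremum I can pick a ball $B_0\in\ZB$ with $\mu(B_0)>M/2$. The intended conclusion is that the hull-ball $B_0^*$ supplied by axiom B4) is forced to equal all of $X$.

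Next I would show that every point of $X$ lies in $B_0^*$. Fix any $x\in X$ and any point $y\in B_0$ (which exists by B1)). By axiom B2) there is a ball $A\in\ZB$ containing both $x$ and $y$. Then $y\in A\cap B_0$, so $A\cap B_0\neq\varnothing$, and $\mu(A)\le M<2\mu(B_0)$. The two-balls relation built into B4), namely that $A\subset B_0^*$ whenever $A\cap B_0\neq\varnothing$ and $\mu(A)\le 2\mu(B_0)$, therefore gives $x\in A\subset B_0^*$. Since $x$ was arbitrary, $X\subset B_0^*$, and the reverse inclusion $B_0^*\subset X$ is trivial. Hence $X=B_0^*\in\ZB$.

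There is no real obstacle in this argument: the only point that needs care is ensuring $M$ can be attained up to a factor of $2$, which is guaranteed by $\mu(X)<\infty$ and the definition of supremum, and verifying that $\mu(A)<2\mu(B_0)$ falls within the hypothesis $\mu(A)\le 2\mu(B_0)$ of B4), which is immediate. I would also observe that the argument uses only axioms B1), B2) and B4); the approximation property B3) plays no role in this lemma.
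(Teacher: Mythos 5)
Your proof is correct and takes essentially the same route as the paper: pick a ball $B_0$ whose measure exceeds half of the (finite) supremum of ball measures, then use the two-balls consequence of B4) to absorb every other ball into $B_0^*$. The paper organizes this slightly differently by taking the supremum only over balls through a fixed point $x_0$ (so nonempty intersection is automatic), but the idea and the use of the axioms are the same.
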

\begin{proof}
	Fix an arbitrary point $x_0\in X$ and let $\ZA\subset \ZB$ be the family of balls containing $x_0$. By B2) condition we can write $X=\cup_{A\in \ZA}A$. Then there exists a $B\in \ZA$ such that $\mu(B)>\frac{1}{2} \sup_{A\in \ZA}\mu(A)$ and using B4) condition we obtain
	\begin{equation*}
		X=\cup_{A\in \ZA}A\subset B^*.
	\end{equation*}
	Thus $X=B^*\in \ZB$.
\end{proof}
\begin{lemma}\label{L5}
	Let $(X,\mu)$ be a measure space equipped with a ball-basis $\ZB$ and $G\in \ZB$. Then there exists a sequence of balls $G=G_1,G_2, \ldots$ (finite or infinite) such that 
	\begin{equation}
		X=\cup_kG_k,\quad G_k^*\subset G_{k+1}. 
	\end{equation}
Moreover for any $B\in \ZB$ there is a ball $G_n\supset B$.  If the ball-basis is doubling, then we can additionally claim 
\begin{equation}\label{x71}
	2\mu(G_k)\le \mu(G_{k+1})\le \beta\mu(G_k), 
\end{equation}
with an admissible constant $\beta>2$. 
\end{lemma}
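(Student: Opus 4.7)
The key tool is the two-balls relation, an immediate consequence of B4): \emph{if $A,B\in\ZB$ satisfy $A\cap B\neq\varnothing$ and $\mu(A)\le 2\mu(B)$, then $A\subset B^*$.} I would construct $(G_k)$ inductively, handling the doubling and general cases separately; the finite-measure subcase $\mu(X)<\infty$ is dispatched by \lem{L0}, which gives $X\in\ZB$ and allows the sequence to terminate with $X$ itself (preceded, if the basis is doubling, by intermediate terms obtained by iterating \df{DD}).

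For the doubling case with $\mu(X)=\infty$, set $G_1=G$; given $G_k$, apply \df{DD} to $G_k^*$, which is legal since $\mu(G_k^*)\le\ZK\mu(G_k)<\infty$ forces $(G_k^*)^*\neq X$. This produces $G_{k+1}\supset G_k^*$ with $2\mu(G_k^*)\le\mu(G_{k+1})\le\eta\mu(G_k^*)$, and combined with $\mu(G_k)\le\mu(G_k^*)\le\ZK\mu(G_k)$ this gives \e{x71} with $\beta=\eta\ZK$ and geometric growth $\mu(G_k)\to\infty$. For the general (non-doubling) case with $\mu(X)=\infty$, I would first apply B3) with $E=X$ and $\varepsilon=1/n$ for $n\ge 1$ and merge the resulting countable families to obtain $\{B_n\}\subset\ZB$ with $\mu\bigl(X\setminus\bigcup_nB_n\bigr)=0$, then inductively take $G_1=G$ and $G_{k+1}$ to be a single ball containing both $G_k^*$ and $B_k$. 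Producing such a $G_{k+1}$ is the crux of the argument: pick $x_1\in G_k^*$, $x_2\in B_k$, and a ball $A\ni x_1,x_2$ via B2), and assume WLOG $\mu(G_k^*)\le\mu(B_k)$. If $\mu(A)\ge 2\mu(B_k)$, the two-balls relation places both $G_k^*$ and $B_k$ inside $A^*$ and one sets $G_{k+1}=A^*$; otherwise $\mu(A)<2\mu(B_k)$ forces $A\subset B_k^*$, whence $x_1\in G_k^*\cap B_k^*$, and a second application of the relation (using $\mu(G_k^*)\le\mu(B_k^*)$) gives $G_k^*\subset B_k^{**}$, so one sets $G_{k+1}=B_k^{**}$.

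Verification is then straightforward. The inclusion $G_k^*\subset G_{k+1}$ and, in the doubling case, the estimate \e{x71} are built into the construction. In both cases $\mu(G_k)\to\infty$: geometrically in the doubling case, and in the general case because $G_{k+1}\supset G_k\cup B_k$ iterated yields $G_{k+1}\supset B_1\cup\cdots\cup B_k$, whose measure tends to $\mu(X)=\infty$. For $X=\bigcup_kG_k$: given $x\in X$, fix any $y\in G_1$, use B2) to get $A\ni x,y$, and choose $k$ large enough that $\mu(G_k)\ge\mu(A)/2$; the two-balls relation gives $A\subset G_k^*\subset G_{k+1}$, hence $x\in G_{k+1}$. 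The engulfing property is analogous: given $B\in\ZB$, pick $x\in B$, locate $k$ with $x\in G_k$, choose $k'\ge k$ with $\mu(G_{k'})\ge\mu(B)/2$, and apply the two-balls relation to $B$ and $G_{k'}$ (which meet at $x$) to obtain $B\subset G_{k'}^*\subset G_{k'+1}$. The main obstacle is the two-step hull construction of $G_{k+1}$ in the non-doubling case; everything else is routine bookkeeping.
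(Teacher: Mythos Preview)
Your doubling-case argument matches the paper's: both apply the doubling condition to $G_k^*$ to produce $G_{k+1}\supset G_k^*$ with $2\mu(G_k^*)\le\mu(G_{k+1})\le\eta\mu(G_k^*)$, yielding $\beta=\eta\ZK$. The non-doubling case, however, takes a genuinely different route. The paper fixes a point $x_0\in G$, considers the family $\ZA$ of all balls through $x_0$, picks $\eta_n\nearrow\sup_{A\in\ZA}\mu(A)$, and inductively sets $G_{l+1}=C^*$ where $C$ is the larger (in measure) of $G_l^*$ and some $A\in\ZA$ with $\mu(A)>\eta_{l+1}$; since any ball $B$ can be enlarged via B2) and B4) to a ball $C\in\ZA$, and $\mu(C)\le\sup_{A\in\ZA}\mu(A)$ guarantees $\mu(C)\le 2\mu(G_n)$ for some $n$, the engulfing property follows from the two-balls relation. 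Your approach instead invokes B3) to obtain a countable near-cover $\{B_n\}$ of $X$ and engulfs the $B_n$ one at a time via your two-step hull construction, then uses $\mu(G_k)\to\infty$ to run the same two-balls engulfing argument. Both arguments are correct. The paper's method is more economical in that it uses only B2) and B4), which is consistent with the remark at the end of the introduction that B3) is not needed for \trm{T2} (whose proof uses this lemma through Lemmas~\ref{LK2}, \ref{L10}, \ref{L11}). Your version introduces a B3) dependence into the general case, though this causes no trouble for the paper's applications: \trm{T2} assumes the doubling hypothesis, and your doubling-case argument does not use B3).
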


\begin{proof}
	Choose an arbitrary point $x_0\in G$ and let $\ZA\subset \ZB$ be the family of balls containing the point $x_0$. Choose a sequence $\eta_n\nearrow\eta=\sup_{A\in \ZA}\mu(A)$, such that $\eta_1<\mu(G)$, where $\eta$ can also be infinity. Let us see by induction that there is an increasing  sequence of balls $G_n\in \ZA$ such that $G_1=G$, $\mu(G_n)> \eta_n$ and $G_n^*\subset G_{n+1}$. The base of induction is trivial. Suppose we have already chosen the first balls $G_k$, $k=1,2,\ldots, l$. There is a ball $A\in \ZA$ so that $\mu(A)>\eta_{l+1}$. Let $C$ be the biggest in measure among two balls $A$, $G_l^*$ and denote $G_{l+1}=C^*$. By property B4) we get $A\cup G_l^*\subset C^*=G_{l+1}$. This implies $\mu(G_{l+1})\ge \mu(A)>\eta_{l+1}$ and $G_{l+1}\supset G_l^*$, completing the induction.
	Let us see that $G_n$ is our desired sequence of balls. Indeed, let $B$ be an arbitrary ball. By B2) property there is a ball $A$ containing $x_0$, such that $A\cap B\neq \varnothing$. Then by property B4) we may find a ball $C\supset A\cup B$. For some $n$ we will have $\mu(C)\le 2\mu(G_n)$ and so once again using B4), we get $B\subset  C\subset G_{n}^*\subset G_{n+1}$. 
	
	The second part of the lemma can be proved by a similar argument. Using the doubling condition (\df {DD}) one can easily find a sequence of balls $G=G_1,G_2, \ldots$ such that
	\begin{equation}
		G_k^*\subset G_{k+1},\quad 2\mu(G_k^*)\le \mu(G_{k+1})\le \eta \mu(G_k^*)\le \eta\ZK \mu(G_k).
	\end{equation}
If $\mu(X)<\infty$, then clearly the ball sequence will be finite and its last term say $G_n$ will coincide with $X$. So the proof follows. If $\mu(X)=\infty$, then our ball sequence is infinite and $\mu(G_n)\to \infty$. As in the first part of the proof, for an arbitrary ball $B$ we can find a ball $C\supset B$ such that $C\cap G\neq \varnothing$. There is a ball $G_n$ such that $\mu(G_n)> \mu(C)$ and we get $B\subset  C\subset G_{n}^*\subset G_{n+1}$. This completes the proof of lemma.
\end{proof}
\begin{remark}
	 If $\mu(X)<\infty$, then the sequence $G_n$ in \lem{L5} is finite and its last-most term coincides with $X$.
\end{remark}

\begin{lemma}\label{L1-1}
	Let $(X,\mu)$ be a measure space with a ball bases $\ZB$. If $E\subset X$ is bounded and a family of balls $\ZG $ is a covering of $E$, i.e. $E\subset \bigcup_{G\in \ZG}G$, then there exists a finite or infinite sequence of pairwise disjoint balls $G_k\in \ZG$ such that $E \subset \bigcup_k G_k^{*}$.
\end{lemma}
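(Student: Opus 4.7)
The plan is to run a greedy Vitali-type selection, after an initial case split to handle covering balls that are much larger than $E$. First I would fix a ball $B_0\in\ZB$ with $E\subset B_0$ (exists by boundedness) and restrict attention to the subfamily $\ZG_0=\{G\in\ZG:\,G\cap E\neq\varnothing\}$. If some $G\in\ZG_0$ has $\mu(G)>2\mu(B_0)$, the two-balls relation applied with $A=B_0$, $B=G$ (so $\mu(A)<2\mu(B)$ and $A\cap B\supset E\cap G\neq\varnothing$) immediately gives $B_0\subset G^*$, and the singleton $\{G\}$ is the required sequence.

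Otherwise every $G\in\ZG_0$ satisfies $\mu(G)\le 2\mu(B_0)$, so $s_1:=\sup_{G\in\ZG_0}\mu(G)<\infty$. I would then select $G_1,G_2,\ldots$ recursively: having chosen $G_1,\ldots,G_{n-1}$, set $\ZG_{n-1}=\{G\in\ZG_0:\,G\cap G_k=\varnothing \text{ for } 1\le k<n\}$ and $s_n=\sup_{G\in\ZG_{n-1}}\mu(G)$; pick any $G_n\in\ZG_{n-1}$ with $\mu(G_n)>s_n/2$, stopping if $\ZG_{n-1}=\varnothing$. Pairwise disjointness of the $G_k$ is built into the selection.

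To verify $E\subset\bigcup_k G_k^*$, I would take an arbitrary $x\in E$, pick some $G\in\ZG_0$ with $x\in G$, and show that $G$ meets a selected $G_k$ with $\mu(G)\le 2\mu(G_k)$; the two-balls relation then yields $G\subset G_k^*$ as desired. If the process terminates in finitely many steps this is automatic, since otherwise $G$ would still be eligible at the final stage. If the process is infinite, the key observation is that each selected $G_k$ satisfies $G_k\cap B_0\supset G_k\cap E\neq\varnothing$ and $\mu(G_k)\le 2\mu(B_0)$, hence by two-balls relation $G_k\subset B_0^*$; pairwise disjointness together with $\mu(B_0^*)\le \ZK\mu(B_0)<\infty$ forces $\mu(G_k)\to 0$ and consequently $s_k\to 0$. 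Letting $k$ be the smallest index with $G\cap G_k\neq\varnothing$ (such $k$ must exist, for otherwise $\mu(G)\le s_k$ for all $k$, forcing $\mu(G)=0$), one has $\mu(G)\le s_k<2\mu(G_k)$, which closes the argument.

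The main obstacle, modest as it is, is handling both the presence of arbitrarily large covering balls and the non-terminating branch of the selection in one clean framework; once the initial reduction to $s_1<\infty$ is made and the containment $G_k\subset B_0^*$ is established, the decay $s_k\to 0$ and the usual two-balls bookkeeping complete the proof without further difficulty.
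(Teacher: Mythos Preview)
Your argument is correct and follows essentially the same Vitali-type greedy selection as the paper's proof, including the initial reduction via a containing ball and the two-balls relation for the covering verification. The only cosmetic differences are your explicit restriction to balls meeting $E$, the threshold $2\mu(B_0)$ in place of $\mu(B_0)$, and your indexing by the smallest $k$ with $G\cap G_k\neq\varnothing$ (the paper instead takes the smallest $m$ with $\mu(G)\ge 2\mu(G_{m+1})$); note that your ``smallest $k$'' bound $\mu(G)\le s_k<2\mu(G_k)$ is what actually supplies the size comparison in the finite-termination branch as well, which you should state explicitly there.
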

\begin{proof}
	The boundedness of $E$ implies $E\subset B$ for some $B\in \ZB$. If there is a ball $G\in\ZG$ so that $G\cap B\neq \varnothing$, $\mu(G)> \mu(B)$, then by two balls relation we will have $E\subset B\subset G^{*}$. Thus our desired sequence can be formed by a single element $G$. Hence we can suppose that every $G\in\ZG$ satisfies $G\cap B\neq \varnothing$, $\mu(G)\le \mu(B)$ and again by two balls relation $G\subset B^*$. Therefore, $\bigcup_{G\in\ZG}G\subset B^{*}$.
	Choose $G_1\in \ZG$, satisfying $\mu(G_1)> \frac{1}{2}\sup_{G\in\ZG}\mu(G)$. Then, suppose by induction we have already chosen elements $G_1,\ldots,G_k$ from $\ZG$. Choose $G_{k+1}\in \ZG$  disjoint with the balls $G_1,\ldots,G_k$ such that
	\begin{equation}\label{b1}
		\mu(G_{k+1})> \frac{1}{2}\sup_{G\in \ZG:\,G\cap G_j=\varnothing,\,j=1,\ldots,k}\mu(G).
	\end{equation}
	If for some $n$ we will not be able to determine $G_{n+1}$ the process will stop and we will get a finite sequence $G_1,G_2,\ldots, G_n$. Otherwise our sequence will be infinite. We shall consider the infinite case of the sequence (the finite case can be done similarly). Since the balls $G_n$ are pairwise disjoint and $G_n\subset B^{*}$, we have $\mu(G_n)\to 0$. Choose an arbitrary $G\in\ZG$ with $G\neq G_k$, $k=1,2,\ldots $ and let $m$ be the smallest integer satisfying
	$\mu(G)\ge 2\mu(G_{m+1})$.
	So we have $G \cap G_j\neq\varnothing$
	for some $1\le j\le m$, since otherwise by \e{b1}, $G$ had to be chosen instead of $G_{m+1}$. Besides, we have $\mu(G)< 2\mu(G_{j})$ because of the minimality property of $m$, and so by two balls relation $G\subset G_{j}^{*}$. Since $G\in \ZG$ was chosen arbitrarily, we get $E \subset\bigcup_{G\in \ZG} G\subset \bigcup_k G_k^{*}$.
\end{proof}

\begin{definition}\label{D1}
	For a measurable set $E\subset X$ a point $x\in E$ is said to be density point if for any $\varepsilon>0$ there exists a ball $B\ni x$ such that
	\begin{equation*}
		\mu(B\cap E)>(1-\varepsilon )\mu(B).
	\end{equation*} 
	We say a ball basis satisfies the density property if for any measurable set $E$ almost all points $x\in E$ are density points. 
\end{definition}
\begin{lemma}\label{L12}
	Every ball basis $\ZB$ satisfies the density condition.
\end{lemma}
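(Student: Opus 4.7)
The plan is to combine the Vitali-type covering \lem{L1-1} with the approximation property B3) to show the set of non-density points of $E$ in $E$ has outer measure zero. First, using \lem{L5} we write $X=\bigcup_k G_k$ for a ball sequence, and since $E\cap G_k\subset E$ implies that every density point of $E\cap G_k$ is automatically a density point of $E$, it suffices to prove the density property for each bounded piece $E\cap G_k$. So I may assume $E$ is bounded, in particular $\mu(E)<\infty$.

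The non-density points of $E$ in $E$ form a countable union $\bigcup_{n\ge 1} N_{1/n}$, where
\[ N_\varepsilon=\{x\in E:\, \mu(B\cap E^c)\ge \varepsilon\mu(B)\text{ for every ball } B\ni x\}, \]
so by countable subadditivity of $\mu^*$ it is enough to show $\mu^*(N_\varepsilon)=0$ for each fixed $\varepsilon\in(0,1)$. Fix also $\delta>0$ and invoke B3) to produce a countable family $\{A_k\}\subset\ZB$ with $V=\bigcup_k A_k$ satisfying $\mu(E\triangle V)<\delta$. For any $x\in N_\varepsilon\cap V$ some $A_k$ contains $x$, and the defining property of $N_\varepsilon$ forces $\mu(A_k\cap E^c)\ge \varepsilon\mu(A_k)$. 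Hence the subfamily $\mathcal H=\{A_k:\, A_k\cap N_\varepsilon\neq\varnothing\}$ is a cover of the bounded set $N_\varepsilon\cap V$, and every element $A\in \mathcal H$ obeys $\mu(A\cap E^c)\ge\varepsilon\mu(A)$.

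Applying \lem{L1-1} to this cover, I extract a pairwise disjoint subfamily $\{A_{k_j}\}\subset \mathcal H$ with $N_\varepsilon\cap V\subset \bigcup_j A_{k_j}^*$. Then property B4), the $\varepsilon$-bound, disjointness of the $A_{k_j}$, and the inclusion $V\setminus E\subset V\triangle E$ combine to give
\[ \mu^*(N_\varepsilon\cap V)\le \sum_j \mu(A_{k_j}^*)\le \ZK\sum_j \mu(A_{k_j})\le \frac{\ZK}{\varepsilon}\sum_j \mu(A_{k_j}\cap E^c)\le \frac{\ZK\mu(V\setminus E)}{\varepsilon}<\frac{\ZK\delta}{\varepsilon}. \]
Combining this with the obvious $\mu^*(N_\varepsilon\setminus V)\le \mu(E\setminus V)<\delta$ yields $\mu^*(N_\varepsilon)<(\ZK/\varepsilon+1)\delta$, and since $\delta>0$ was arbitrary $\mu^*(N_\varepsilon)=0$.

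The main subtlety is ensuring that each ball surviving the Vitali extraction retains the pointwise lower bound $\mu(A\cap E^c)/\mu(A)\ge\varepsilon$, since \lem{L1-1} chooses balls greedily by size rather than by any condition relative to $N_\varepsilon$. This is arranged by pre-restricting the covering family to $\mathcal H$ (those $A_k$ that actually meet $N_\varepsilon$) before invoking the covering lemma, so that every selected $A_{k_j}$ contains a point of $N_\varepsilon$ and thereby inherits the required bound.
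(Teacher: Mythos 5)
Your proof is correct and is, at bottom, the same argument the paper uses -- reduce to bounded $E$ via \lem{L5}, approximate by balls via B3), extract a disjoint subcover via the Vitali-type \lem{L1-1}, and exploit B4) together with the uniform density defect bound to make the total small -- but you have organized it differently, in a way that makes it a little cleaner. The paper argues by contradiction: it posits a ``bad'' set $F$ with $\mu^*(F)>0$ on which density fails at level $\alpha$, passes to a measurable hull $\bar F$ with $\mu(\bar F)=\mu^*(F)$, approximates $\bar F$ (not $E$) by balls, and then needs the defining property of the measurable hull to argue that the balls in the approximation which miss $F$ carry negligible total measure. You avoid the hull entirely: you estimate $\mu^*(N_\varepsilon)$ directly, approximate $E$ itself by balls to form $V$, and split $N_\varepsilon$ into $N_\varepsilon\cap V$ (handled by restricting the cover to $\mathcal H$ and running Vitali) and $N_\varepsilon\setminus V$ (handled trivially by $N_\varepsilon\setminus V\subset E\setminus V$, $\mu(E\setminus V)<\delta$). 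The observation that pre-restricting to $\mathcal H$ preserves the lower bound $\mu(A\cap E^c)\ge\varepsilon\mu(A)$ on every selected ball is exactly right, and the disjointness of the $A_{k_j}\subset V$ gives $\sum_j\mu(A_{k_j}\cap E^c)\le\mu(V\setminus E)<\delta$ as you claim. What the paper's hull construction buys is a tidy contradiction hypothesis stated for a single set $F$; what your version buys is elimination of the hull and of the case analysis over balls meeting/not meeting $F$, replaced by a simple outer-measure split. Either way the constants are comparable and the use of B3), B4), \lem{L5}, and \lem{L1-1} is identical.
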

\begin{proof}
	Applying \lem{L5} one can check that it is enough to establish the density property for the bounded measurable sets. Suppose to the contrary there exist a bounded measurable set $E\subset X$ together its subset $F\subset E$ (maybe non-measurable) with an outer measure $\mu^*(F)>0$
	such that
	\begin{equation}\label{z38}
		\mu(B\setminus E)>\alpha\mu(B) \text{ whenever }B\in\ZB,\, B\cap F\neq\varnothing,
	\end{equation}
	where $0<\alpha<1$. According to the definition of the outer measure one can find a measurable set $\bar F$ such that 
	\begin{equation}\label{z2}
		F\subset \bar F\subset E, \quad \mu(\bar F)=\mu^*(F).
	\end{equation}
	By B3)-condition there is a sequence of balls $B_k$, $k=1,2,\ldots $, such that
	\begin{equation}\label{z42}	
		\mu\left(\bar F\bigtriangleup (\cup_k B_k)\right)<\varepsilon.
	\end{equation}
	We discuss two kind of balls $B_k$, satisfying either $B_k\cap F=\varnothing$ or $B_k\cap F\neq\varnothing$. For the first collection we have 
	\begin{equation*}
		\mu^*(F)\le \mu\left(\bar F\setminus \bigcup_{k:\,B_k\cap F=\varnothing} B_k\right)\le \mu(\bar F)=\mu^*(F).
	\end{equation*}
	This implies 
	\begin{equation*}
		\mu\left(\bar F\bigcap \left(\bigcup_{k:\,B_k\cap F=\varnothing} B_k\right)\right)=0
	\end{equation*}
	and therefore, combining also \e{z42}, we obtain
	\begin{equation}\label{z3}
		\mu\left(\bigcup_{k:\,B_k\cap F=\varnothing} B_k\right)<\varepsilon.
	\end{equation}
	Now consider the balls, satisfying $B_k\cap F\neq\varnothing$. It follows from \e {z38} and \e{z2} that 
	\begin{equation}\label{z41}
		\mu(B_k\setminus \bar F)\ge \mu(B_k\setminus E)>\alpha \mu(B_k),\quad k=1,2,\ldots.
	\end{equation}
	Since $E$ and so $\bar F$ are bounded, applying \lem {L1-1} and \e{z42}, one can find a subsequence of pairwise disjoint balls $\tilde B_k$, $k=1,2,\ldots$, such that
	\begin{equation*}
		\mu\left(\bar F\setminus\cup_k\tilde B_k^{*}\right)<\varepsilon.
	\end{equation*} 
	Thus, from B4)-condition, \e{z42}, \e {z3} and \e {z41}, we obtain
	\begin{align*}
		\mu^*(F)< \mu(\bar F)&\le \mu\left(\cup_k\tilde B_k^{*}\right)+\varepsilon\le \ZK\sum_k\mu(\tilde B_k)+\varepsilon\\
		&= \ZK\mu\left(\bigcup_{k:\,\tilde B_k\cap F=\varnothing} \tilde B_k\right)+\ZK\sum_{k:\,\tilde B_k\cap F\neq\varnothing} \mu(\tilde B_k)+\varepsilon\\
		&<\ZK\varepsilon+\frac{\ZK}{\alpha}\sum_k\mu(\tilde B_k\setminus \bar F)+\varepsilon\\
		&\le (\ZK+1)\varepsilon+\frac{\ZK}{\alpha}\mu\left(\bar F\bigtriangleup( \cup_k B_k)\right)<\varepsilon\left(2\ZK+1+\frac{\ZK}{\alpha}\right).
	\end{align*}
	Since $\varepsilon $ can be arbitrarily small, we get $\mu^*(F)=0$ and so a contradiction.
\end{proof}

\section{Proof of \trm{T2}}
\begin{lemma}\label{LK1}
	If $A\subset B$ are balls and $x\notin B^*$, then $d(x,A)\ge\mu(B)$.
\end{lemma}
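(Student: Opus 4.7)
The plan is to argue by contradiction using the two balls relation that was extracted from condition B4) at the start of Section 3 (if balls $U,V$ satisfy $U\cap V\neq\varnothing$ and $\mu(U)\le 2\mu(V)$, then $U\subset V^{*}$). Concretely, I would suppose there exists a ball $C\in\ZB$ with $C\supset A\cup\{x\}$ and $\mu(C)<\mu(B)$, and work toward showing $x\in B^{*}$, which contradicts the hypothesis.

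First, I would observe that $A\subset B\cap C$, since $A\subset B$ by assumption and $A\subset C$ by the choice of $C$. By B1) we have $\mu(A)>0$, so $A\neq\varnothing$, and therefore $B\cap C\neq\varnothing$. Next, from $\mu(C)<\mu(B)$ I get in particular $\mu(C)\le 2\mu(B)$, so the two balls relation applies and yields $C\subset B^{*}$. But $x\in C$, so this forces $x\in B^{*}$, contradicting the hypothesis $x\notin B^{*}$.

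Consequently, every ball $C\in\ZB$ with $C\supset A\cup\{x\}$ must satisfy $\mu(C)\ge\mu(B)$, and taking the infimum over such $C$ in the definition \e{x48} gives $d(x,A)\ge\mu(B)$ (the case when no such $C$ exists is trivial, as the infimum over the empty set is $+\infty$). There is no real obstacle here; the only thing to be careful about is reducing the statement to exactly the measure comparison $\mu(C)\le 2\mu(B)$ required for invoking B4), which is guaranteed by the strict inequality $\mu(C)<\mu(B)$ produced by the contradiction hypothesis.
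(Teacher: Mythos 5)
Your proof is correct and follows essentially the same argument as the paper: contradiction via the two balls relation, using a ball $C\supset A\cup\{x\}$ with $\mu(C)<\mu(B)$, noting $C\cap B\supset A\neq\varnothing$, and concluding $C\subset B^*$ so $x\in B^*$. Your write-up just makes the nonemptiness of $A$ and the measure comparison $\mu(C)\le 2\mu(B)$ more explicit than the paper does.
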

\begin{proof}
	Suppose to the contrary that $d(x,A)< \mu(B)$, which means there is a ball $A'\supset A$ such that $x\in A'$ and $\mu(A')<\mu(B)$. Then we get $A'\subset B^*$, which is in contradiction with $x\notin B^*$.
\end{proof}
The following two lemmas are standard and well-known in the classical situations.
\begin{lemma}\label{L6}
	For any function $f\in L^1_{loc}(X)$ and balls $A, B$ with $A\cap B\neq\varnothing$, $\mu(A)\le \mu(B)$ we have
	\begin{equation}\label{x44}
		|f_A-f_B|\lesssim \frac{\mu(B)}{\mu(A)}\cdot  \langle f\rangle_{\#,A}^*.
	\end{equation}
	\begin{proof}
		First suppose that $A\subset B$. Then we get
		\begin{align}
			\left|f_{A}-f_{B}\right|&\le \frac{1}{\mu(A)}\int_{A}|f-f_{B}|\\
			&\le\frac{\mu(B)}{\mu(A)} \cdot \frac{1}{\mu(B)}\int_{B}|f-f_{B}|\le \frac{\mu(B)}{\mu(A)}\cdot \langle f\rangle_{\#,B}.\label{x72}
		\end{align}
	In the general case, by the two ball relation we can write $A\subset B^*$ and so applying \e{x72} we obtain
	\begin{align}
	\left|f_{A}-f_{B}\right|&\le \left|f_{A}-f_{B^*}\right|+\left|f_{B}-f_{B^*}\right|\\
	&\le \left(\frac{\mu(B^*)}{\mu(A)}+\frac{\mu(B^*)}{\mu(B)}\right) \langle f\rangle_{\#,B^*}\lesssim \frac{\mu(B)}{\mu(A)}\cdot  \langle f\rangle_{\#,A}^*.
	\end{align}
	\end{proof}
\end{lemma}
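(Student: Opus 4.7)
The plan is to reduce the general case to the nested case $A\subset B$ via the hull ball $B^*$, exploiting the two-balls relation coming from axiom B4).

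First I would treat the nested case $A\subset B$ directly. Since $\int_A f_B = \mu(A)f_B$, we can write
\[
|f_A - f_B| = \left|\frac{1}{\mu(A)}\int_A (f-f_B)\right| \le \frac{1}{\mu(A)}\int_A|f-f_B| \le \frac{\mu(B)}{\mu(A)}\cdot\frac{1}{\mu(B)}\int_B|f-f_B|,
\]
i.e.\ $|f_A-f_B|\le (\mu(B)/\mu(A))\langle f\rangle_{\#,B}$, and since $B\supset A$ this is at most $(\mu(B)/\mu(A))\langle f\rangle^*_{\#,A}$.

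For the general case $A\cap B\neq\varnothing$ with $\mu(A)\le\mu(B)$, the two-balls relation (immediate from axiom B4) with $B$ playing the larger role) gives $A\subset B^*$; trivially also $B\subset B^*$. I would then insert $f_{B^*}$ by the triangle inequality,
\[
|f_A-f_B|\le |f_A-f_{B^*}|+|f_B-f_{B^*}|,
\]
and apply the nested estimate to each term (with $B^*$ as the outer ball). Using $\mu(B^*)\le\mathcal{K}\mu(B)$ from \eqref{h13} and $\mu(B)\ge\mu(A)$, this bounds the right side by
\[
\left(\frac{\mu(B^*)}{\mu(A)}+\frac{\mu(B^*)}{\mu(B)}\right)\langle f\rangle_{\#,B^*} \lesssim \frac{\mu(B)}{\mu(A)}\langle f\rangle_{\#,B^*}.
\]
Since $B^*\supset A$, the final factor is dominated by $\langle f\rangle^*_{\#,A}$, finishing the proof.

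There is no serious obstacle here; the only subtlety is remembering that the sharp-average with a star, $\langle f\rangle^*_{\#,A}$, is the supremum of $\langle f\rangle_{\#,C}$ over balls $C\supset A$ (not containing $A$ in some weaker sense), which is exactly what lets us absorb both $\langle f\rangle_{\#,B}$ and $\langle f\rangle_{\#,B^*}$ into the right-hand side. The use of the hull ball $B^*$ as a common container for $A$ and $B$ is the one place the ball-basis structure (as opposed to a purely nested situation) enters, and it is the conceptual heart of the argument.
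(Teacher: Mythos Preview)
Your proposal is correct and follows essentially the same approach as the paper: first establish the nested case $A\subset B$ by the obvious averaging estimate, then handle the general case by passing to the hull ball $B^*$ via the two-balls relation, applying the triangle inequality with $f_{B^*}$, and absorbing $\langle f\rangle_{\#,B^*}$ into $\langle f\rangle_{\#,A}^*$ since $B^*\supset A$. The paper's proof is identical in structure and detail.
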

\begin{lemma}\label{L10}
	Let $\ZB$ be a doubling ball-basis on $X$. Then for any $f\in L^1_{loc}(X)$ and balls $A\subset B$ it holds the inequality
	\begin{equation}\label{x42}
		\langle f-f_A\rangle_B\lesssim (1+\log(\mu(B)/\mu(A)))\cdot \langle f\rangle_{\#,A}^*.
	\end{equation}
\end{lemma}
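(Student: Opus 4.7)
The natural strategy is a dyadic-type chain argument: interpolate between $A$ and $B$ via a sequence of balls whose measures roughly double at each step, and control $|f_A - f_B|$ by telescoping through this chain.

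First, I would apply \lem{L5} to the ball $G := A$ in the doubling setting, producing a sequence $A = G_1 \subset G_2 \subset \ldots$ (using $G_k \subset G_k^* \subset G_{k+1}$) with
\begin{equation*}
    2\mu(G_k) \le \mu(G_{k+1}) \le \beta \mu(G_k),
\end{equation*}
and with the property that any ball is eventually contained in some $G_n$. Let $n$ be the smallest index with $\mu(G_n) \ge \mu(B)$. Since $\mu(G_{n-1}) < \mu(B)$ and $\mu(G_{n-1}) \ge 2^{n-2}\mu(A)$, this yields $n \lesssim 1 + \log(\mu(B)/\mu(A))$. Moreover $\mu(G_n) \le \beta \mu(G_{n-1}) < \beta\mu(B)$, and since $A \subset G_n \cap B$, the two balls relation gives $B \subset G_n^*$, with $\mu(G_n^*) \le \ZK\mu(G_n) \lesssim \mu(B)$.

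Next I would split
\begin{equation*}
    \langle f - f_A \rangle_B \le \frac{1}{\mu(B)}\int_B |f - f_{G_n^*}| + |f_{G_n^*} - f_A|.
\end{equation*}
Because $B \subset G_n^*$ and $\mu(G_n^*) \sim \mu(B)$, the first term is at most $(\mu(G_n^*)/\mu(B))\langle f\rangle_{\#,G_n^*} \lesssim \langle f\rangle^*_{\#,A}$, using that $G_n^* \supset A$. For the second term, telescope along the chain:
\begin{equation*}
    |f_{G_n^*} - f_A| \le |f_{G_n} - f_{G_n^*}| + \sum_{k=1}^{n-1}|f_{G_k} - f_{G_{k+1}}|,
\end{equation*}
and apply \lem{L6} to each consecutive pair. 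Since $\mu(G_{k+1})/\mu(G_k) \le \beta$ and $\mu(G_n^*)/\mu(G_n) \le \ZK$, each summand is $\lesssim \langle f\rangle^*_{\#,G_k} \le \langle f\rangle^*_{\#,A}$ (the last inequality because $G_k \supset A$, so any ball containing $G_k$ also contains $A$). Summing $n$ such terms and combining with the estimate on $n$ gives \e{x42}.

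The only mildly delicate step is calibrating $n$ so that $G_n$ simultaneously satisfies $B \subset G_n^*$ and $\mu(G_n^*) \sim \mu(B)$; this is exactly where the doubling condition is used (without it one could only guarantee a chain of unbounded ratio, losing the logarithmic dependence). Everything else is a routine application of the triangle inequality and the previous lemma.
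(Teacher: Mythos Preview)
Your proposal is correct and follows essentially the same chain/telescoping argument as the paper's proof: build a doubling sequence $A=G_1\subset G_2\subset\cdots$ via \lem{L5}, stop at the first index where the measure exceeds $\mu(B)$, and telescope $|f_A-f_B|$ along the chain using \lem{L6}. The only cosmetic difference is that you route the endpoint through $G_n^*\supset B$ and bound $\langle f-f_{G_n^*}\rangle_B$, whereas the paper splits as $\langle f-f_B\rangle_B+|f_A-f_B|$ and applies \lem{L6} directly to the pair $(G_n,B)$; both variants are equally valid.
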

\begin{proof} 
Applying the second part of \lem{L5}, we find a sequence of balls $A=G_0\subset G_1\subset \dots$, satisfying \e{x71}. We have
$\mu(G_n)<\mu(B)\le \mu(G_{n+1})$ for some integer $n$ and we can write $n\lesssim 1+\log(\mu(B)/\mu(A))$.
	By \lem {L6}, 
	\begin{align}
		&|f_{G_k}-f_{G_{k+1}}|\lesssim \langle f\rangle_{G_{k},\#}^*\lesssim \langle f\rangle_{\#,A}^* \hbox{ (see } \e{x5}),\\
		&|f_{G_n}-f_{B}|\lesssim \langle f\rangle_{G_n,\#}^*\lesssim\langle f\rangle_{\#,A}^*,
	\end{align}
	so we obtain
	\begin{align*}
		\langle f-f_A\rangle_B&\le \langle f-f_B\rangle_B+|f_A-f_B|\\
		&\le \langle f-f_B\rangle_B+ \sum_{k=0}^{n-1}|f_{G_k}-f_{G_{k+1}}|+|f_{G_n}-f_{B}|\\
		&\lesssim n\langle f\rangle_{\#,A}^*\lesssim (1+\log(\mu(B)/\mu(A)))\langle f\rangle_{\#,A}^*.
	\end{align*}
\end{proof}

We say a positive function $\phi_B$ is comparable with a ball $B$ if it satisfies \e{y4}. The simplest function comparable with a ball $B$ is the $\ZI_B/\mu(B)$.
\begin{lemma}\label{LK2}
	Let $A,B\in \ZB$, $A\subset B$, be balls of a doubling ball-basis $\ZB$ and let $\phi_A$, $\phi_B$ be functions comparable with $A$ and $B$ respectively. Then for any function $f\in L_{loc}(X)$ with  $\langle f\rangle_{\#,A}^*<\infty$ both $f_{\phi_A}$ and $f_{\phi_B}$ are finite and 
	\begin{equation}\label{y2}
		\langle f-f_{\phi_A}\rangle_{\phi_B}\lesssim I(\omega)(1+\log(\mu(B)/\mu(A)) )\cdot \langle f\rangle_{\#,A}^*.
	\end{equation}
\end{lemma}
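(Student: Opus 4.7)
The plan is to decompose $X$ into dyadic annular shells around $B$ measured by the distance functional $d(\cdot,B)$ from \e{x48}, control the kernel $\phi_B$ pointwise on each shell via K2), bound the corresponding averages of $f-f_A$ via \lem{L10}, and finally pass from $f_A$ to $f_{\phi_A}$ by a separate application of the same estimate. Since $d(x,B)\ge\mu(B)$ for every $x\in X$, I set $U_0=\{x:d(x,B)\le\mu(B)\}$ and $U_j=\{x:2^{j-1}\mu(B)<d(x,B)\le 2^j\mu(B)\}$ for $j\ge 1$, so that $X=\bigsqcup_{j\ge 0}U_j$. Applying the doubling part of \lem{L5} starting at $B$ produces a chain $B=K_0\subset K_1\subset\cdots$ with $\mu(K_k)\ge 2^k\mu(B)$ from below and $\mu(K_k)\le\beta^k\mu(B)$ from above. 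Using the two-balls relation one shows that each $U_j$ is contained in a ball $K'_j\supset B$ of measure $\lesssim 2^j\mu(B)$: any $A'\ni x$ with $A'\supset B$ and $\mu(A')\le 2\cdot 2^j\mu(B)$ can be placed inside the first $K_k$ of measure at least $2^{j+1}\mu(B)$, whose doubling successor $K_{k+1}$ still has measure $\lesssim 2^j\mu(B)$.

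On $U_j$ with $j\ge 1$, K2) gives $\phi_B(x)\le c_2\mu(B)^{-1}\omega(2^{j-1})$ (and $\phi_B\le c_2\mu(B)^{-1}$ on $U_0$). Combining this with $U_j\subset K'_j$ and \lem{L10} applied to the pair $A\subset K'_j$ yields
\begin{equation*}
\int_{U_j}|f-f_A|\phi_B\;\lesssim\;\omega(2^{j-1})\cdot 2^j\cdot\bigl(1+j+\log(\mu(B)/\mu(A))\bigr)\langle f\rangle^*_{\#,A}.
\end{equation*}
Summing over $j$, both $\sum_j 2^j\omega(2^{j-1})$ and $\sum_j j\cdot 2^j\omega(2^{j-1})$ are controlled by $\int_1^\infty\omega(t)(1+\log t)\,dt\lesssim I(\omega)$, so
\begin{equation*}
\int_X|f-f_A|\phi_B\;\lesssim\; I(\omega)\bigl(1+\log(\mu(B)/\mu(A))\bigr)\langle f\rangle^*_{\#,A}.
\end{equation*}

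The case $B=A$ of the last estimate gives $|f_{\phi_A}-f_A|\le\int|f-f_A|\phi_A\lesssim I(\omega)\langle f\rangle^*_{\#,A}$, from which \e{y2} follows by the triangle inequality and $\int\phi_B=1$; finiteness of $f_{\phi_A}$ and $f_{\phi_B}$ drops out of the same intermediate estimates together with the automatic finiteness of $f_A$. The main technical obstacle I expect is the size estimate $\mu(U_j)\lesssim 2^j\mu(B)$: the chain $K_k$ doubles only from below but grows like $\beta^k$ from above, so the annulus index $j$ must be paired with the chain index $k$ carefully (via the two-balls relation) to avoid a spurious factor $\beta^j$ that would destroy the summability against $\omega(2^{j-1})$. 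A secondary care point is that the logarithm $\log(\mu(K'_j)/\mu(A))$ from \lem{L10} has to be split as $\lesssim j+\log(\mu(B)/\mu(A))$ so that the $j$-part is absorbed into the weighted integral defining $I(\omega)$ while only the ratio $\mu(B)/\mu(A)$ factors out of the sum.
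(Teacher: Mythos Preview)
Your proposal is correct and follows essentially the same route as the paper: the paper also builds a doubling chain $B=B_0\subset B_1\subset\cdots$ from \lem{L5}, bounds $\phi_B$ on each shell $B_n\setminus B_{n-1}$ via K2) and \lem{LK1}, applies \lem{L10} to estimate $\langle f-f_A\rangle_{B_n}$, sums using $I(\omega)$, and finishes with the $B=A$ case and the triangle inequality. The only cosmetic difference is that the paper indexes shells directly by the chain annuli $B_n\setminus B_{n-1}$ rather than by dyadic level sets of $d(\cdot,B)$, but \lem{LK1} makes the two parametrizations equivalent.
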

\begin{proof}
	Applying \lem{L5}, we find a sequence of balls $B=B_0,B_1,B_2,\ldots$ such that
	\begin{align}
		&2\mu(B_{n-1})\le \mu(B_{n})\lesssim \mu(B_{n-1}),\label{x3}\\
		&\cup_kB_k=X, \quad B_n^*\subset B_{n+1}
	\end{align}
	Applying  \lem{LK1}, for $x\in B_{n}\setminus B_{n-1}\subset	B_n\setminus B_{n-2}^*$ we can write
	\begin{equation}
		\mu(B_n)\ge d(x,B)\ge  \mu(B_{n-2}),\quad n\ge 2.
	\end{equation} 
	Thus, applying the right-hand side of inequality \e{y4} and the monotonicity of the function $\omega$, we obtain
	\begin{align*}
		&\phi_B(x)\lesssim \frac{1}{\mu(B)}\lesssim \frac{1}{\mu(B_1)},\quad x\in B_1,\\
		&\phi_B(x)\lesssim \frac{1}{\mu(B)}\omega\left(\frac{\mu(B_{n-2})}{\mu(B)}\right),\quad x\in B_{n}\setminus B_{n-1}, \quad n\ge 2.
	\end{align*}
	Besides,  from \e{x3} it follows that $\mu(B_{n-1}\setminus B_{n-2})\ge \mu(B_{n-2})\gtrsim \mu(B_n)$. Therefore, for every $x\in X$ we obtain
	\begin{align}\label{x40}
		\phi_B(x)&\lesssim \frac{\ZI_{B_1}(x)}{\mu(B_1)}+\frac{1}{\mu(B)}\sum_{n\ge 2}\omega\left(\frac{\mu(B_{n-2})}{\mu(B)}\right)\ZI_{B_{n}\setminus B_{n-1}}(x)\\
		&\lesssim \frac{\ZI_{B_1}(x)}{\mu(B_1)}+\sum_{n\ge 2}\frac{\mu(B_{n-1}\setminus B_{n-2})}{\mu(B)}\omega\left(\frac{\mu(B_{n-2})}{\mu(B)}\right)\frac{\ZI_{B_n}(x)}{\mu(B_n)}.\label{r56}
	\end{align}
	Applying \e{x3} and the property $\omega(2x)\lesssim \omega(x)$, one can check
	\begin{align}
		&\sum_{n\ge 2}\frac{\mu(B_{n-1}\setminus B_{n-2})}{\mu(B)}\omega\left(\frac{\mu(B_{n-2})}{\mu(B)}\right)\lesssim \int_1^\infty \omega(x)dx\lesssim I(\omega),\\
		&\sum_{n\ge 2}n\cdot \frac{\mu(B_{n-1}\setminus B_{n-2})}{\mu(B)}\omega\left(\frac{\mu(B_{n-2})}{\mu(B)}\right)\lesssim \int_1^\infty \omega(x)\log(1+x)dx<I(\omega).
	\end{align}
	Using \lem{L10} and the right side of inequality in \e{x3}, we can write
	\begin{align}
		\langle f-f_A\rangle_{B_n}&\lesssim(1+\log (\mu(B_n)/\mu(A)))\langle f\rangle_{\#,A}^*\\
		&= (1+\log(\mu(B_n)/\mu(B))+\log(\mu(B)/\mu(A))\langle f\rangle_{\#,A}^*\\
		&\lesssim (n+\log(\mu(B)/\mu(A)) )\langle f\rangle_{\#,A}^*,\quad n\ge 1. 
	\end{align}
	Hence we obtain
	\begin{align}
		\langle f-f_A\rangle_{\phi_B}&\lesssim \langle f-f_A\rangle_{B_1}+\sum_{n\ge 2}\frac{\mu(B_{n-1}\setminus B_{n-2})}{\mu(B)}\omega\left(\frac{\mu(B_{n-2})}{\mu(B)}\right) \langle f-f_A\rangle_{B_n}\\
		&\lesssim (I(\omega)+\log(\mu(B)/\mu(A)))\cdot  \langle f\rangle_{\#,A}^*,\label{k5}
	\end{align}
	From this bound first we conclude $f_{\phi_B}\le |f_A|+\langle f-f_A\rangle_{\phi_B}<\infty$. Inequality \e{k5} holds for $B=A$ too, so we can similarly write $f_{\phi_A}<\infty$. Applying \e{k5} with $B=A$ we get
	\begin{equation}\label{k6}
		|f_{\phi_A}-f_A|=\left|\int_{X}(f-f_A)\phi_A\right|\lesssim  I(\omega)\cdot \langle f\rangle_{\#,A}^*,
	\end{equation}
	then, combining \e{k5} and \e{k6}, we obtain
	\begin{align*}
		\int_{X}|f-f_{\phi_A}|\phi_B&\le \int_{X}|f-f_A|\phi_B+|f_{\phi_A}-f_A|\\
		&\lesssim( I(\omega)+\log(\mu(B)/\mu(A)))\cdot \langle f\rangle_{\#,A}^*.
	\end{align*}
Since $I(\omega)\ge 1$ (see \e{y7}), this implies \e{y2}.
\end{proof}
\begin{lemma}\label{L11}
	If $f\in L_{loc}(X)$ and $\MM f(x_0)<\infty$ at some point $x_0\in X$, then $\MM f(x)<\infty$ almost everywhere. 
\end{lemma}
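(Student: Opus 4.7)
The plan is to bound $\MM_\ZG f(y)$ by two quantities, each finite almost everywhere: one controlled by $V=\MM_\ZG f(x_0)$ via the hypothesis, and one given by a truncated standard-maximal function whose a.e.\ finiteness follows from the weak $L^1$ inequality of \trm{T1-1}.

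First, I would use the kernel lower bound $\phi_B\ge c_1\ZI_B/\mu(B)$ from condition K2, together with property G2, to deduce that the standard maximal function $Mf$ of \e{1-1} satisfies $Mf(x_0)\le (\eta/c_1)V$. Applying the doubling part of \lem{L5} produces a nested sequence $G_0\subset G_1\subset\dots$ of balls through $x_0$ with $\mu(G_{k+1})\sim\mu(G_k)$ and $X=\bigcup_k G_k$; the bound on $Mf(x_0)$ then forces $f\ZI_{G_k}\in L^1(X)$ for every $k$. By \trm{T1-1}, $M(f\ZI_{G_k})<\infty$ outside a set of outer measure zero; let $E$ denote the countable union of these exceptional sets, so $\mu^*(E)=0$.

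Fix $y\in X\setminus E$ and pick $k$ with $y\in G_k$. For an arbitrary $B\in\ZB(y)$, I would invoke \lem{L5} again to generate a doubling sequence $B=B_0\subset B_1\subset\dots$ with $B_n^*\subset B_{n+1}$ that covers $X$. Let $n_1$ be the smallest index with $\mu(B_{n_1})\ge\mu(G_k)$. For $n\ge n_1$, the two-balls relation applied to $B_n$ and $G_k$ (which share the point $y$) gives $G_k\subset B_n^*\subset B_{n+1}$, whence $x_0\in B_{n+1}$; combined with the bound on $Mf(x_0)$ and the doubling of the sequence, this yields $\langle|f|\rangle_{B_n}\lesssim V$. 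For $n<n_1$, two-balls in the opposite direction gives $B_n\subset G_k^*\subset G_{k+1}$, so $\langle|f|\rangle_{B_n}\le M(f\ZI_{G_{k+1}})(y)<\infty$. Setting $A(y)=\max\bigl(cV,\,M(f\ZI_{G_{k+1}})(y)\bigr)$, we obtain $\langle|f|\rangle_{B_n}\le A(y)$ for every $n$. Substituting this into the pointwise decomposition of $\phi_B$ derived in the proof of \lem{LK2} (the estimates \e{x40}--\e{r56}), together with the summability $\sum_{n\ge 2}(\mu(B_{n-1}\setminus B_{n-2})/\mu(B))\,\omega(\mu(B_{n-2})/\mu(B))\lesssim I(\omega)$, we conclude $\int_X|f|\phi_B\lesssim A(y)\,I(\omega)$, a bound independent of $B\in\ZB(y)$. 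Hence $\MM_\ZG f(y)\lesssim A(y)\,I(\omega)<\infty$.

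The main obstacle lies in the short range $n<n_1$, where the balls $B_n$ do not contain $x_0$ and the hypothesis gives no direct control on $\langle|f|\rangle_{B_n}$. This is precisely where the weak $L^1$ bound of \trm{T1-1} applied to the $L^1$-truncation $f\ZI_{G_{k+1}}$ is essential, and it is the source of the exceptional null set $E$.
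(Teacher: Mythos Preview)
You have misidentified the operator: in \lem{L11} the symbol $\MM$ denotes the \emph{standard} maximal function \e{1-1}, not the kernel operator $\MM_\ZG$ of \e{y6}. You set $V=\MM_\ZG f(x_0)$ and aim to bound $\MM_\ZG f(y)$, so what you are proving is a different (and more involved) statement.

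The paper's argument for $\MM$ is a direct large/small splitting. Fix any ball $B\ni x_0$ and a point $x\in B$. For a ball $A\ni x$ with $\mu(A)>\mu(B)$ the two-balls relation gives $B\subset A^*$, hence $x_0\in A^*$ and $\langle f\rangle_A\le\ZK\langle f\rangle_{A^*}\le\ZK\,\MM f(x_0)$. For $\mu(A)\le\mu(B)$ one has $A\subset B^*$, so $\langle f\rangle_A\le\MM(f\cdot\ZI_{B^*})(x)$. This yields $\MM f(x)\le\ZK\,\MM f(x_0)+\MM(f\cdot\ZI_{B^*})(x)$ on $B$, and the second term is a.e.\ finite by \trm{T1-1}; covering $X$ by a countable chain of such balls (\lem{L5}) finishes. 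No doubling, no kernel structure, no decomposition \e{x40}--\e{r56} is needed.

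Your proof is essentially correct for the $\MM_\ZG$-analogue under the extra hypotheses of \trm{T2} (doubling ball-basis, $I(\omega)<\infty$), and your large/small splitting at the index $n_1$ is the same mechanism as the paper's splitting at $\mu(A)=\mu(B)$. But this machinery is superfluous here: once \lem{L11} for $\MM$ is proved, the $\MM_\ZG$-version follows immediately from the pointwise bound \e{x43}, and the proof of \trm{T2} only ever invokes the lemma to produce the full-measure set $G=\{\MM f<\infty\}$.
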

\begin{proof}
	Let $B$ be an arbitrary ball containing the point $x_0$. For any point $x\in B$ and a ball $A\ni x$ with $\mu(A)> \mu(B)$ we have $x_0\in A^*$. Hence,
	\begin{equation*}
		\langle f\rangle_A\le \ZK \langle f\rangle_{A^*}\le \ZK\cdot \MM f(x_0).
	\end{equation*}
Thus we conclude 
\begin{align}
	\MM f(x)&\le \ZK\cdot \MM f(x_0)+\sup_{A\ni x,\, \mu(A)\le \mu(B)}\langle f\rangle_A\\
	&\le \ZK\cdot \MM f(x_0)+\sup_{A\ni x}\langle f\cdot \ZI_{B^*}\rangle_A\\
	&= \ZK\cdot \MM f(x_0)+\MM (f\cdot \ZI_{B^*})(x)
\end{align}
for any $x\in B$. On the other hand according to weak-$L^1$ bound we can write
\begin{equation}
	\mu\{x\in B:\, \MM (f\cdot \ZI_{B^*})(x)>\lambda\}\lesssim \frac{\int_{B^*}|f|}{\lambda}.
\end{equation}
This implies $\MM (f\cdot \ZI_{B^*})(x)<\infty$ a.e. and so we will have $\MM f(x)<\infty$ a.e. on $B$. Since $B$ is an arbitrary ball, in view of \lem{L5}, we get 
$\MM f(x)<\infty$ a.e. on $X$.
\end{proof}
\begin{lemma}
	If a function $\phi_B$ satisfies \e{y4}, then
	\begin{equation}\label{x43}
		\left|\int_Xf\cdot \phi_B\right|\lesssim \MM f(x)\text{ for any }x\in B.
	\end{equation}
\end{lemma}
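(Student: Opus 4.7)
The plan is to reuse the annular decomposition of $\phi_B$ built in the proof of \lem{LK2}. First I would invoke \lem{L5} on the ball $B$ to produce a chain $B=B_0\subset B_1\subset B_2\subset\dots$ with $\cup_k B_k=X$, $B_n^*\subset B_{n+1}$, and $2\mu(B_{n-1})\le\mu(B_n)\lesssim\mu(B_{n-1})$. Then, exactly as in the derivation leading to \e{r56} (using \lem{LK1} to estimate $d(\cdot,B)$ on the annulus $B_n\setminus B_{n-1}$, together with the monotonicity and the doubling property of $\omega$), I would obtain the pointwise majorant
\[
\phi_B(t)\lesssim \frac{\ZI_{B_1}(t)}{\mu(B_1)}+\sum_{n\ge 2}\frac{\mu(B_{n-1}\setminus B_{n-2})}{\mu(B)}\,\omega\!\left(\frac{\mu(B_{n-2})}{\mu(B)}\right)\frac{\ZI_{B_n}(t)}{\mu(B_n)}.
\]
This step uses only the upper half of \e{y4}, which is precisely the comparability hypothesis.

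Next I would integrate $|f|$ against this majorant. For the fixed $x\in B$ one has $x\in B_n$ for every $n\ge 0$, so by the definition \e{1-1} of the standard maximal function each average $\langle |f|\rangle_{B_n}$ is dominated by $\MM f(x)$. Consequently
\[
\left|\int_X f\,\phi_B\right|\le \int_X |f|\phi_B \lesssim \MM f(x)\cdot\left(1+\sum_{n\ge 2}\frac{\mu(B_{n-1}\setminus B_{n-2})}{\mu(B)}\,\omega\!\left(\frac{\mu(B_{n-2})}{\mu(B)}\right)\right).
\]

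The parenthesized sum was already estimated inside the proof of \lem{LK2} by a constant multiple of $\int_1^\infty \omega(t)\,dt$, which is in turn bounded by the admissible constant $I(\omega)$. Absorbing this factor into $\lesssim$ yields \e{x43}. I do not expect any genuine obstacle: the whole argument is essentially a corollary of the decomposition built for \lem{LK2}, the only new ingredient being the trivial remark that every $x\in B$ lies in each enlargement $B_n$ and therefore sees each average $\langle|f|\rangle_{B_n}$ under the maximal function at $x$.
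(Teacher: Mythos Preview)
Your proposal is correct and follows essentially the same approach as the paper: both arguments integrate $|f|$ against the pointwise majorant \e{r56} for $\phi_B$ established in the proof of \lem{LK2}, use that $x\in B\subset B_n$ for every $n$ to bound each average $\langle|f|\rangle_{B_n}$ by $\MM f(x)$, and then control the resulting series by $\int_1^\infty\omega(t)\,dt$. The paper's proof is simply a terser version of what you wrote.
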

\begin{proof}
	We will use inequality \e{r56} obtained in the proof of \lem{LK1}. Thus for $x\in B$ we obtain
	\begin{align*}
		\left|\int_Xf\cdot \phi_B\right|&\le \MM f(x)\left(1+\sum_{n\ge 2}\frac{\mu(B_{n-1}\setminus B_{n-2})}{\mu(B)}\omega\left(\frac{\mu(B_{n-2})}{\mu(B)}\right)\right)\\
			&\lesssim  \MM f(x)\int_1^\infty \omega(x)dx.
	\end{align*}
\end{proof}
\begin{proof}[Proof of \trm{T2}]
	We will use the standard maximal function $\MM$ defined in \e{1-1}.
Fix a nontrivial function $f\in L_{loc}(X)$ and a ball $B$ satisfying the hypothesis of theorem. Set $g=(f-f_{\phi_{B}})\cdot \ZI_{B^*}$ and for a $\lambda>0$ consider the set $E=E_{B,\lambda}=\{y\in B:\, \MM g(y)\le \lambda\}$, which can be non-measurable. According to the weak-$L^1$ bound of $\MM$ (see \trm{T1-1}) we have
	\begin{equation}
		\mu^*(B\setminus E)=\mu^*\{y\in B:\, \MM g(y)>\lambda\}\lesssim  \frac{1}{\lambda}\cdot \int_{ B^*}|g|.
	\end{equation}
	So for an appropriate number $\lambda\sim (1-\alpha)^{-1}\langle g\rangle_{B^*}$	we have $\mu^*(B\setminus E)< (1-\alpha)\mu(B)$. Therefore there is a measurable set $F\subset E$ such that 
	\begin{equation}\label{b2}
		\mu(F)>\alpha \mu(B), \quad F\subset E.
	\end{equation} 
	Since the function $\ZI_{B^*}/\mu(B^*)$ is a comparable with the ball $B^*$, by \lem{LK2} we can write $\langle f-f_{\phi_{ B}}\rangle_{ B^*}\lesssim  I(\omega)\langle f\rangle^*_{\#,B}$. Thus, for any $x\in F$ we obtain
	\begin{align}
		&\MM g(x)\lesssim (1-\alpha)^{-1}\langle g\rangle_{ B^*}=(1-\alpha)^{-1}\langle f-f_{\phi_{ B}}\rangle_{ B^*}\label{b1}\\
		&\qquad\quad\, \lesssim (1-\alpha)^{-1}I(\omega)\langle f\rangle^*_{\#,B}.
	\end{align}
Now denote
\begin{equation}
	G=\{x\in X:\, \MM f(x)<\infty\}.
\end{equation}
According to \lem{L11} and that $\MM(f)$ is not identically infinite, we have $\mu(X\setminus G)=0$.	Fix two points 
\begin{equation}\label{x70}
		x\in F\cap G,\quad x'\in B\cap G.
\end{equation}
According to \e{x43}, for any $\delta >0$ there is a ball $A\in \ZB$ such that
	\begin{equation}\label{k1}
		\MM_{\ZG} f(x)\le  \langle f\rangle_{A}+\delta\text{ and } x\in A.
	\end{equation}
	If $\mu(A)>\mu(B)$, then we will have $x'\in B\subset A^*\subset \bar B$, where $\bar B\in \ZB(x')$ and $\mu(\bar B)\lesssim \mu(A)$ (see G2)).
	Thus by the definition of $\MM_{\ZG}$ we can write
	\begin{equation}\label{k3}
		\MM_{\ZG} f(x')\ge \langle f\rangle_{\phi_{\bar B}}.
	\end{equation}
	Using the left hand side of \e{y4}, we can write $\langle f-f_{\phi_{\bar B}}\rangle_{A}\lesssim \langle f-f_{\phi_{\bar B}}\rangle_{\bar B}$. Then using also relations \e{x45}, \e{y2}, \e{k1} and \e{k3}, we obtain
	\begin{align}
		\MM_{\ZG} f(x)-\MM_{\ZG} f(x')&\le \langle f\rangle_{A}-\langle f\rangle_{\phi_{\bar B}}+\delta\\
		&\le \langle f-f_{\phi_{\bar B}}\rangle_{A}+|f_{\phi_{\bar B}}|+\langle f-f_{\phi_{\bar B}}\rangle_{\phi_{\bar B}}-|f_{\phi_{\bar B}}|+\delta\\
		&\lesssim  \langle f-f_{\phi_{\bar B}}\rangle_{\phi_{\bar B}}+\langle f-f_{\phi_{\bar B}}\rangle_{\phi_{\bar B}}+\delta\\
		&\lesssim I(\omega)\langle f\rangle^*_{\#,B}+\delta.\label{a10}
	\end{align}
 In the case $\mu(A)\le \mu(B)$ we will have $A\subset  B^*$. Then by the definition of function $g$ we can write
	\begin{equation}\label{x4}
	\langle f-f_{\phi_{B}}\rangle_{A}=\langle g\rangle_{A}.
	\end{equation}
Besides, for any $x'\in B\cap G$ we may find a ball $\bar B\supset B^*$ such that 
\begin{equation}
	\bar B\in \ZB(x'),\quad \mu(\bar B)\lesssim \mu(B).
\end{equation}
Thus we get $\MM_{\ZG} f(x')\ge \langle f\rangle_{\phi_{\bar B}}$.
Then, using \e{y2}, \e{b1}, \e{k1} and \e{x4}, we obtain
	\begin{align}\label{a11}
		\MM_{\ZG} f(x)-\MM_{\ZG} f(x')&\le \langle f\rangle_{A}-\langle f\rangle_{\phi_{\bar B}}+\delta\\
		&\le \langle f-f_{\phi_{ B}}\rangle_{A}+|f_{\phi_{ B}}|+\langle f-f_{\phi_{ B}}\rangle_{{\phi_{\bar B}}}-|f_{\phi_{B}}|+\delta\\
		&\le \langle f-f_{\phi_{B}}\rangle_{A}+\langle f-f_{\phi_{B}}\rangle_{{\phi_{\bar B}}}+\delta\\
		&\lesssim \langle g\rangle_{A}+ I(\omega)\langle f\rangle^*_{\#,B}+\delta\\
		&\le \MM g(x)+ I(\omega)\langle f\rangle^*_{\#,B}+\delta\\
		&\lesssim  (1-\alpha)^{-1} I(\omega)\langle f\rangle^*_{\#,B}+\delta.
	\end{align}
	Since $\delta$ can be arbitrary small, from \e{a10} and \e{a11} we conclude
	\begin{equation*}
		\MM_{\ZG} f(x)-\MM_{\ZG} f(x')\lesssim (1-\alpha)^{-1} I(\omega)\langle f\rangle^*_{\#,B}
	\end{equation*}
for any choice of $x$ and $x'$ with conditions \e{x70}. Thus we conclude 
\begin{equation*}
	0\le \MM_{\ZG} f(x)-\INF_B(\MM_{\ZG} (f))\lesssim (1-\alpha)^{-1} I(\omega)\langle f\rangle^*_{\#,B}\text{ for all }x\in F\cap G.
\end{equation*}
Taking into account \e{b2} as well as $\mu(X\setminus G)=0$, then we obtain
	\begin{equation}\label{b3}
		\LOSC_{B,\alpha}(\MM_{\ZG} f)\lesssim (1-\alpha)^{-1} I(\omega)\langle f\rangle^*_{\#,B},
	\end{equation}
completing the proof of theorem.
\end{proof}
\section{Proofs of Theorems \ref{T4} and \ref{T11}}\label{S5}
Let $\beta$ and $\theta$ be the constants from \lem{L5} (see \e{x71}) and from  \df{D3} respectively.	
\begin{proposition}\label{P}
	Let $\ZB$ satisfy doubling and regularity properties,  $0<\varepsilon<1$ and
	\begin{equation}\label{x74}
		\alpha= 1-\frac{\varepsilon \theta}{4\beta^2\ZK}>1/2.
	\end{equation}
	If $g\in \BMO_\alpha(X)$ (perhaps non-measurable), then
	\begin{equation}\label{x10}
		\mu^*\{x\in B:\, |g(x)|>\lambda+\|g\|_{\BMO_\alpha}\}\le \varepsilon\cdot \mu^*\{x\in B:\, |g(x)|>\lambda\}
	\end{equation}
	for any ball $B\in \ZB$ and any number $\lambda>0$, provided
	\begin{equation}\label{x17}
		\mu^*\{x\in B:\, |g(x)|>\lambda\}\le \frac{\theta }{5\ZK\beta^2}\cdot \mu(B).
	\end{equation}
\end{proposition}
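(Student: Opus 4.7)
The plan is to carry out a Calderón–Zygmund type stopping-time decomposition on the super-level set $S=\{x\in B:|g(x)|>\lambda\}$. Fix a measurable cover $\bar S\subset B$ with $\mu(\bar S)=\mu^*(S)$. The useful observation $\mu^*(A\cap S)=\mu(A\cap\bar S)$, valid for every measurable set $A$, combined with \lem{L12}, says that almost every $x\in\bar S$ is a density point of $S$ in the outer-measure sense.

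For each such density point $x$, pick a ball $G_1\ni x$ with $\mu(G_1\cap\bar S)\ge c_1\mu(G_1)$, where $c_1:=\theta/(4\beta^2)$, and extend it via \lem{L5} to a chain $G_1\subset G_2\subset\cdots$ satisfying $G_n^*\subset G_{n+1}$ and $2\mu(G_n)\le\mu(G_{n+1})\le\beta\mu(G_n)$. Since eventually $G_n\supset B$, the density $\mu^*(G_n\cap S)/\mu(G_n)$ is forced below $\eta=\theta/(5\ZK\beta^2)<c_1$. Let $n^*$ be the last index with density $\ge c_1$ and set $B_x:=G_{n^*}$. Then
\begin{equation*}
\mu^*(B_x\cap S)\ge c_1\mu(B_x),\qquad \mu^*(B_x^*\cap S)\le\mu^*(G_{n^*+1}\cap S)<c_1\beta\,\mu(B_x^*)=:c_2\mu(B_x^*),
\end{equation*}
and $\mu(B_x)\le\mu(B)$, because otherwise the density at $B_x$ would not exceed $\eta<c_1$.

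Apply \lem{L1-1} to $\bar S$ covered by $\{B_x\}$ to extract a disjoint subfamily $\{B_k\}$ whose hulls $\{B_k^*\}$ cover $\bar S$ (modulo a null set). Disjointness and the fatness bound give $\sum_k\mu(B_k)\le c_1^{-1}\mu^*(S)$. Since $\mu(B_k)\le\mu(B)$ and $B_k\cap B\ne\varnothing$, the regularity hypothesis (\df{D3}) yields $\mu(B_k^*\cap B)\ge\theta\mu(B_k^*)$. On each $B_k^*$ the $\BMO_\alpha$ bound produces $E_k\subset B_k^*$ with $\mu(E_k)>\alpha\mu(B_k^*)$ and $\OSC_{E_k}(g)\le\|g\|_{\BMO_\alpha}$. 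Combined with the three estimates just obtained, the easy numerical inequality $\alpha+\theta-c_2>1$ forces $E_k\cap(B_k^*\cap B)\setminus S\ne\varnothing$. A witness $y_k$ in this intersection satisfies $|g(y_k)|\le\lambda$, hence $|g(x)|\le|g(y_k)|+\OSC_{E_k}(g)\le\lambda+\|g\|_{\BMO_\alpha}$ for a.e.\ $x\in E_k$, and therefore $\mu^*(\{|g|>\lambda+\|g\|_{\BMO_\alpha}\}\cap B_k^*)\le(1-\alpha)\mu(B_k^*)$.

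Summing over $k$, using $\mu(B_k^*)\le\ZK\mu(B_k)$ and the explicit identity $(1-\alpha)\ZK/c_1=\varepsilon$ built into the definition of $\alpha$ in \e{x74}, produces \e{x10}. The main obstacle is the numerology: the constants $c_1$, $c_2=c_1\beta$, $\eta$, and $1-\alpha$ must simultaneously satisfy $\eta<c_1$, $c_2<\alpha+\theta-1$, and $(1-\alpha)\ZK/c_1\le\varepsilon$; the precise value of $\alpha$ in \e{x74} and the smallness threshold in \e{x17} are tuned exactly so that all three hold. Additional bookkeeping is needed to interpret essential statements when $g$ is possibly non-measurable, which we handle by systematic use of outer measures and of the measurable cover $\bar S$.
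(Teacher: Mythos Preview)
Your proposal is correct and follows essentially the same route as the paper's own proof: take a measurable cover $\bar S$ of the super-level set, use the density lemma (\lem{L12}) plus the chain construction of \lem{L5} to run a stopping-time argument on the density $\mu(G_n\cap\bar S)/\mu(G_n)$, extract a disjoint subfamily via \lem{L1-1}, invoke regularity to get $\mu(B_k^*\cap B)\ge\theta\mu(B_k^*)$, locate a witness point in $E_k\cap B\setminus\bar S$, and sum. The only cosmetic differences are that the paper stops one index \emph{after} the last high-density ball (so its selected ball has density in $[\alpha\theta/(2\beta^2),\alpha\theta/(2\beta))$) whereas you stop \emph{at} the last one, and the paper's density threshold carries a factor of $\alpha$ while your $c_1=\theta/(4\beta^2)$ does not; the resulting arithmetic is equivalent and your identity $(1-\alpha)\ZK/c_1=\varepsilon$ matches the paper's final chain of inequalities exactly.
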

\begin{proof}
	Fix a ball $B$ and a number $\lambda>0$, satisfying  \e{x17}. Then consider the set $E(\lambda)=\{x\in B:\,|g(x)|>\lambda\}$, which can be non-measurable. Choose a measurable set $F$ such that
	\begin{equation}\label{x50}
		B\supset F\supset E(\lambda),\quad \mu(F)=\mu^*(E(\lambda))\le \frac{\theta }{5\ZK\beta^2}\cdot \mu(B) \,(\text{see } \e{x17}).
	\end{equation}
	Applying the density property (\lem{L12}) for any point $x\in F$
	we find a ball $G_0(x)\ni x$ such that $\mu(G_0(x)\cap F)\ge\alpha\theta \mu(G_0(x))/(2\beta)$. Then, applying the second part of \lem{L5}, one can find a sequence of balls $G_0=G_0(x)\subset G_1\subset G_2\subset\ldots $, satisfying the conditions of the lemma.  Let $n$ be the biggest integer such that \begin{equation}
		\mu(G_{n-1}(x)\cap F)\ge\frac{ \alpha\theta\cdot \mu(G_{n-1}(x))}{2\beta}.
	\end{equation}
	Clearly, the ball $G(x)=G_n$ satisfies
	\begin{equation}\label{x8}
		\frac{\alpha \theta}{2\beta^2}\le \frac{\mu(G(x)\cap F)}{\mu(G(x))}<\frac{\alpha\theta}{2\beta}\, (\text{see }\e{x71}).
	\end{equation}
	Since  $\mu(G_{n+1})\le \beta \mu(G_n^*)$ and $G^*(x)=G_n^*\subset G_{n+1}$ we obtain
	\begin{equation}\label{x73}
		\frac{\mu(G^*(x)\cap F)}{\mu(G^*(x))}\le  \frac{\beta\mu(G_{n+1}\cap F)}{ \mu(G_{n+1})}< \frac{\alpha\theta}{2}.
	\end{equation}
	The left hand side inequality in \e{x8} together with \e{x74} and \e{x50} imply
	\begin{equation}\label{x18}
		\mu(G^*(x))\le \ZK\mu(G(x))\le \frac{2\ZK\beta^2}{\alpha\theta}\cdot \mu(F)<  \mu(B).
	\end{equation}
	Applying \lem{L1-1}, then we find a sequence of pairwise disjoint balls $\{B_k\}\subset \{G(x):\, x\in F\}$ such that
	\begin{equation}\label{x11}
		F\subset \cup_kB_k^*.
	\end{equation}
	By the definition of $\alpha$-oscillation there are sets $E_k\subset B^*_k$ such that
	\begin{align}
		&\mu(E_k)\ge \alpha\mu(B^*_k),\label{x7}\\
		&\OSC_{E_k}(g)\le \OSC_{B_k^*,\alpha}(g)\le  \|g\|_{\BMO_\alpha}.\label{x12}
	\end{align}
	Applying the regularity property (see \df{D3}) and \e{x18}, we can write $\mu(B_k^*\cap B)\ge \theta \mu(B_k^*)$. Then from \e{x73} and \e{x7} we obtain
	\begin{equation}\label{x15}
		\mu(B_k^*\cap F)<\frac{\alpha\theta\mu(B_k^*)}{2}\le \frac{\alpha}{2} \mu(B_k^*\cap B)<  \frac{1}{2} \mu(B_k^*\cap B)
	\end{equation}
	and
	\begin{align}
		\mu(E_k\cap B)&\ge  \mu(B^*_k\cap B)-\mu(B_k^*\setminus E_k)\\
		&\ge \mu(B^*_k\cap B)-(1-\alpha)\mu(B_k^*)\\
		&\ge \mu(B_k^*\cap B)-\frac{1-\alpha}{\theta}\cdot \mu(B_k^*\cap B)\\
		&\ge  \frac{1}{2}  \mu(B_k^*\cap B),\label{x14}
	\end{align}
	respectively. Since both $B_k^*\cap F$ and $E_k\cap B$ are subsets of the set $B_k^*\cap B$, from \e{x14} and \e{x15} we obtain
	$\mu((E_k\cap B)\setminus F)=\mu(E_k\cap B)-\mu(B_k^*\cap F)>0$.
	Thus we can find  a point $x_k\in E_k\setminus F$ so that $|g(x_k)|\le\lambda$. Then by \e{x12} for any $x\in E_k$ we will have $|g(x)-g(x_k)|\le \|g\|_{\BMO_\alpha}$ and so $|g(x)|\le \lambda+\|g\|_{\BMO_\alpha}$. This implies 
	\begin{align}
		\mu^* (E\big(\lambda+\|g\|_{\BMO_\alpha})\big)&\le \sum_k\mu(B_k^*\setminus E_k)&\quad (\hbox{see }\e{x50},\e{x11})\\
		&\le (1-\alpha) \sum_k\mu(B_k^*)&\quad (\hbox{see }\e{x7})\\
		&\le \ZK(1-\alpha) \sum_k\mu(B_k)&\\
		&\le  \frac{2\beta^2 \ZK}{\alpha\theta}(1-\alpha) \sum_k\mu(B_k\cap F)&\quad (\hbox{see }\e{x8})\\
		&\le\varepsilon\cdot \mu(F)\\
		&= \varepsilon\cdot \mu^*(E(\lambda)).&\quad (\hbox{see }\e{x50})
	\end{align}
	Hence we get \e{x10}.
\end{proof}
\begin{proof}[Proof of \trm{T4}]
	Let $\alpha$ be as in \e{x74} and let $\varepsilon=1/2$. The inclusion $ \BLO(X)\subset \BLO_\alpha(X)$ immediately follows from the definition of $\BLO(X)$. For a function $f\in \BLO_\alpha(X)$ we have
	\begin{align}
		\mu^*\{x\in B:\,& f(x)-\INF_B(f)>\|f\|_{\BLO_\alpha}\}\\
		&\le \mu^*\{x\in B:\, f(x)-\INF_B(f)>\LOSC_{B,\alpha}(f)\}\\
		&\le (1-\alpha)\mu(B)=\frac{\theta}{8\ZK\beta^2 }\cdot \mu(B)\label{a40}
	\end{align}
and $\|f\|_{\BLO_\alpha}\ge \|f\|_{\BMO_\alpha}=\|f-\INF_B(f)\|_{\BMO_\alpha}$.	Thus, applying \pro {P}, for $g=f-\INF_B(f)$ and $\lambda>\|f\|_{\BLO_\alpha}$ we can write
	\begin{align}
		\mu^*\{x\in B:\,f(x)-\INF_B(f)>&\lambda+\|f\|_{\BLO_\alpha}\}\\
		&\le \frac{1}{2}\cdot \mu^*\{x\in B:\, f(x)-\INF_B(f)>\lambda\}.
	\end{align}
Applying this inequality consecutively, we get
	\begin{equation}
		\frac{1}{\mu(B)}\cdot \mu^*\{x\in B:\, f(x)-\INF_B(f)>\lambda\}\le c_1\exp \left(\frac{-c_2\lambda}{\|f\|_{\BLO_\alpha}}\right)
	\end{equation}
	with admissible constants $c_1$, $c_2$, which implies \e{x6}.
\end{proof}
\begin{proof}[Proof of \trm{T11}]
	The inclusion $ \BMO(X)\subset \BMO_\alpha(X)$ is immediate. To prove the converse embedding we let $\alpha$ be the number from \e{x74} and $\varepsilon=1/2$.  By the definition of $\alpha$-oscillation any ball $B$ contains a measurable set $E\subset B$ such that
	\begin{align}
		&\mu(E)\ge \alpha\mu(B),\\
		&\OSC_E(f)\le 2\OSC_{B,\alpha}(f).
	\end{align}
	Thus for $a_B=(\SUP_E(f)+\INF_E(f))/2$ we have
	\begin{align}
		\mu^*\{x\in B:\, &|f(x)-a_B|>\OSC_{B,\alpha}(f)\}\\
		&\le \mu^*\{x\in B:\, |f(x)-a_B|>\OSC_{E}(f)/2\}\\
		&\le\mu(B\setminus E) \le (1-\alpha)\mu(B),
	\end{align}
	and therefore
	\begin{align}
		\mu^*\{x\in B:\, &|f(x)-a_B|>\|f-a_B\|_{\BMO_\alpha}\}\\
		&=\mu^*\{x\in B:\, |f(x)-a_B|>\|f\|_{\BMO_\alpha}\}\\
		&\le  \mu^*\{x\in B:\, |f(x)-a_B|>\OSC_{B,\alpha}(f)\}\\
		&\le (1-\alpha)\mu(B)\\
		&=\frac{ \theta }{8\ZK\beta^2}\cdot \mu(B).
	\end{align}
	Then, applying \pro {P} consecutively, we conclude
	\begin{equation}\label{r53}
		\frac{1}{\mu(B)}\cdot \mu^*\{x\in B:\, |f(x)-a_B|>\lambda\}\le c_1\exp \left(\frac{-c_2\lambda}{\|f\|_{\BMO_\alpha}}\right).
	\end{equation}
	Note that the integral of a non-mesurable function is defined by the distribution function (see \e{r52}). Finally, from \e{r53} it follows that 
	\begin{equation*}
		\langle f\rangle_{\#, B}\le 2\left(\frac{1}{\mu(B)}\int_B|f-a_B|^r\right)^{1/r}\lesssim \|f\|_{\BMO_\alpha}
	\end{equation*}
	for any ball $B$, and so we get $\|f\|_\BMO\lesssim 	\|f\|_{\BMO_\alpha}$. This completes the proof of \trm{T11}.
\end{proof}

\section{Examples}\label{S6}

\subsection{Maximal operators on $\ZR^d$}
Let $\xi:\ZR^d\to \ZR$ be a non-negative spherical function, which is decreasing with respect to $|x|$ and satisfies
\begin{equation}\label{x60}
	J(\xi)=\int_{\ZR^d}\xi(x)\log(2+|x|)dx<\infty.
\end{equation}
Let $\phi=\{\phi_r(x,y),\,r>0\}$ be a family of kernels, satisfying the bound
\begin{align}
	&\int_{\ZR^d}\phi_r(x,y)dy=1,\\
	&\frac{c\cdot \ZI_{\{|y|\le r\}}(y)}{r^d}\le \phi_r(x,y)\le \frac{1}{r^d}\cdot \xi\left(\frac{y}{r}\right),\label{r51}
\end{align}
where $c$ is  positive constant. Consider the maximal functions
	\begin{align}
		&\MM_\phi f(x)=\sup_{r>0}\int_{\ZR^d}|f(t)|\phi_r(x,y)dy,\label{x75}\\
		&\bar\MM_\phi f(x)=\sup_{r>0,\, |x'-x|< r}\int_{\ZR^d}|f(t)|\phi_r(x',y)dy.\label{x76}
	\end{align}
Observe that these operators are more general versions of operators \e{x55} and \e{r55}. For both operators we have 
\begin{corollary}
Let a function $\xi:\ZR^d\to \ZR$ be as above and suppose that the kernels $\phi_r(x,y),\,r>0,$ satisfy \e{r51}. If $f\in \BMO(\ZR^d)$ and $\MM_\phi(f)$ is not identically infinite, then $\MM_\ZG(f)\in \BLO(\ZR^d)$ and 
	\begin{equation}
		\|\MM_{\phi}(f)\|_{\BLO(\ZR^d)}\lesssim I(\omega)\|f\|_{\BMO(\ZR^d)}.
	\end{equation}
The same bound also holds for the operator $\bar\MM_\phi$. 
\end{corollary}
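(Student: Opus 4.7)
The plan is to realize both $\MM_\phi$ and $\bar\MM_\phi$ as instances of the operator $\MM_\ZG$ from \cor{KC2}, with a carefully chosen ball-basis and KB-couple on $\ZR^d$. First I take $\ZB$ to be the family of Euclidean balls in $\ZR^d$, a doubling ball-basis. To each $B=B(x_0,r)\in\ZB$ I attach the kernel $\phi_B(y)=\phi_r(x_0,y)$. Then K1) is the normalization hypothesis $\int\phi_r(x,\cdot)=1$, and the lower bound in K2) is exactly the left side of \e{r51} (with $c_1=c$). For the basis-structure I set $\ZB(x)=\{B(x,r):r>0\}$ in the centered case, and $\ZB(x)=\{B(x',r):|x'-x|<r\}$ in the uncentered case. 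Condition G1) is immediate, while G2) follows by observing that any ball $B=B(y,s)$ containing $x$ is contained in $B(x,2s)$ (resp.\ $B(y,2s)$), which lies in $\ZB(x)$ and has $\mu$-measure $2^d\mu(B)$. With these choices $\MM_\ZG f(x)$ coincides with $\MM_\phi f(x)$ (resp.\ $\bar\MM_\phi f(x)$).

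The substantive step is the upper bound in K2). For $B=B(x_0,r)$ we have $\mu(B)=c_dr^d$, and for any $y\in\ZR^d$ the smallest Euclidean ball containing $B\cup\{y\}$ has radius $\sim r+|y-x_0|$, so $d(y,B)\sim c_d(r+|y-x_0|)^d$ and hence $d(y,B)/\mu(B)\sim\bigl((r+|y-x_0|)/r\bigr)^d$. Writing $\tilde\xi$ for the radial profile of $\xi$ and using the right side of \e{r51},
\[
\phi_B(y)\le r^{-d}\tilde\xi\!\left(\tfrac{|y-x_0|}{r}\right)\;\lesssim\;\mu(B)^{-1}\tilde\xi\!\left(c\bigl((d(y,B)/\mu(B))^{1/d}-1\bigr)\right).
\]
So the upper half of K2) holds with $\omega(t)\sim\tilde\xi(c(t^{1/d}-1))$, normalized so that $\omega(1)=1$. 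If this $\omega$ fails to be doubling, I replace $\tilde\xi$ (equivalently $\omega$) by the doubling majorant $\omega_*(t)=\sup_{s\ge t}(s/t)^{-\alpha}\omega(s)$ for small $\alpha>0$, which satisfies $\omega_*(2t)\le 2^\alpha\omega_*(t)$ and $\omega_*\ge\omega$; the integrability $I(\omega_*)<\infty$ follows from that of $\omega$ by Fubini/rearrangement, provided $\alpha$ is chosen small enough.

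Finally, the change of variables $s=c(t^{1/d}-1)$ in $I(\omega)=1+\int_1^\infty\omega(t)\log(1+t)\,dt$ gives $dt\sim(1+s)^{d-1}\,ds$ and $\log(1+t)\sim\log(2+s)$, so
\[
I(\omega)\;\sim\;1+\int_0^\infty\tilde\xi(s)\log(2+s)(1+s)^{d-1}\,ds\;\sim\;J(\xi),
\]
by a polar-coordinate integration, matching the hypothesis \e{x60}. Therefore the KB-couple $\ZG$ satisfies the hypotheses of \cor{KC2}, and applying that corollary yields $\MM_\phi(f)\in\BLO(\ZR^d)$ with $\|\MM_\phi(f)\|_{\BLO}\lesssim I(\omega)\|f\|_{\BMO}\lesssim J(\xi)\|f\|_{\BMO}$, and the same for $\bar\MM_\phi$.

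The main obstacle I expect is the technical upgrading of the pointwise bound on $\phi_B$ to a bound involving a genuine doubling modulus $\omega$: the raw estimate $\omega_0(t)=\tilde\xi(c(t^{1/d}-1))$ is decreasing but need not satisfy $\omega_0(2t)\le c_0\omega_0(t)$ for a fixed constant $c_0$. The majorant construction above handles this, but care is needed to verify that the passage from $\omega_0$ to $\omega_*$ preserves finiteness of $\int\omega\log(1+t)\,dt$; alternatively one checks directly that in the only place where doubling of $\omega$ enters the proof of \trm{T2} (the dyadic summation in \lem{LK2}) the monotonicity of $\omega_0$ together with the weighted integrability $J(\xi)<\infty$ is already enough, via telescoping of $\omega(\mu(B_{n-2})/\mu(B))$ against $\mu(B_{n-1}\setminus B_{n-2})/\mu(B)$.
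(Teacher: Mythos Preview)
Your proposal is correct and follows essentially the same route as the paper: both take the Euclidean balls as the ball-basis, set $\phi_B=\phi_r(x_0,\cdot)$ for $B=B(x_0,r)$, use the centered (resp.\ uncentered) families $\ZB(x)$ as the basis-structure, identify $\omega(t)\sim\xi(t^{1/d})$ (the paper writes $\omega(t)\sim\tfrac{\xi(0)}{\xi(1)}\xi(t^{1/d})$, which agrees with your $\tilde\xi(c(t^{1/d}-1))$ up to admissible constants), and then check $I(\omega)\lesssim J(\xi)$ by passing to polar coordinates.

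The one place where you add work the paper does not is the doubling majorant for $\omega$. This is unnecessary: the condition $\omega(2t)\le c_0\,\omega(t)$ in \df{D2} is automatic with $c_0=1$ once $\omega$ is non-increasing, so your raw $\omega_0$ already qualifies as a modulus of continuity and the construction of $\omega_*$ (and the attendant verification that $I(\omega_*)<\infty$) can be dropped entirely.
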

\begin{proof}
	This immediately follows from \trm{KC2}. We just need to indicate the corresponding KB-couples generating these operator. Let $B=B(x,r)\subset \ZR^d$ be the open ball of the center $x$ and the radius $r$. Setting $\phi_B(y)=\phi_r(x,y)$ we will have \e{x45}. Then one can check that the family of functions $\{\phi_B\}$ form a kernel-structure with a modulus of continuity, satisfying \e{y7}. Moreover, condition \e{y4} obviously holds with a modulus of continuity $\omega(t)\sim \frac{\xi(0)}{\xi(1)}\cdot \xi(t^{1/d})$, $t\ge 1$ and it immediately follows from \e{r51}. On the other hand, since $\xi(x)$ is a spherical function, passing to the spherical coordinates, we get
\begin{align*}
	\infty> J(\xi)&\ge\int_{|x|\ge 1}\xi(x)\log(2+|x|)dx=c_d\int_{r\ge 1}\xi(r)\log(2+r)r^{d-1}dr\\
	&=\frac{c_d}{d}\int_1^\infty\xi(t^{1/d})\log(2+t^{1/d})dt\gtrsim \int_1^\infty\omega(t)\log(2+t)dt=I(\omega).
\end{align*}
Hence we have $I(\omega)<\infty$ that gives \e{y7}. Having suitable kernel-structure $\{\phi_B\}$, let us indicate the ball-structures for both maximal functions $\MM_\phi$ and $\bar \MM_\phi$.
For $\MM_\phi$ we define $\ZB(x)$ to be the family of balls centered at $x$. On can check 
\begin{equation}
	\MM_\phi f(x)=\sup_{B\in \ZB(x)}\langle f\rangle_B.
\end{equation}
Choosing $\ZB(x)$ to be the set of all balls containing the point $x$, the same we can write for the operator $\bar \MM_\phi$. It remains to add that in both cases the families $\ZB(x)$ satisfy conditions of G1) and G2) of \df{D4}.
\end{proof}
We can also consider the $2\pi$-periodic version of the operators \e{x75} and \e{x76}. Namely, our function $\xi$ is the same as in \e{x60}, but the kernels \begin{equation}
	\phi_n(x,y)=	\phi_n(x_1,\ldots,x_d,y_1,\ldots,y_d)
\end{equation} 
are defined on the $d$-dimensional torus $\ZT^d$, where $\ZT=\ZR/(2\pi\ZZ)$. Equivalently, those are functions that are $2\pi$-periodic with respect to each variables $x_j$ and $y_j$. We require the following conditions:
\begin{align}
	&\int_{\ZT^d}\phi_r(x,y)dy=1,\\
	&\frac{c\cdot \ZI_{\{|y|\le r\}}(y)}{r^d}\le \phi_r(x,y)\le \frac{1}{r^d}\cdot \xi\left(\frac{y}{r}\right),
\end{align}
provided $ x_j,y_j\in (-\pi,\pi)$. Using the same argument we obtain that the analogous maximal operators are bounded on $\BMO(\ZT^d)$. As a corollary of this we have.
\begin{corollary}
	Let $\sigma_n(x,f)$ be the operator of $(C,\alpha)$, $\alpha>0$, Ces\`aro means of the Fourier series of the function $f\in L^1(\ZT)$. Then the maximal operator 
	\begin{equation}
		\sigma^*(x,f)=\sup_n|\sigma_n(x,f)|
	\end{equation}
boundedly maps $\BMO(\ZT^d)$ into itself.
\end{corollary}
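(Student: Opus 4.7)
The plan is to fit the $(C,\alpha)$-maximal operator into the framework of the previous corollary (in its $\ZT$-periodic incarnation) by dominating $|K_n^\alpha|$ pointwise by a radial decreasing function of the required type. On $\ZT$ I take the doubling ball-basis to consist of arcs, and let $\ZB(x)$ be the collection of arcs centered at $x$; this gives a basis-structure in the sense of \df{D4}.

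The crucial input is the classical pointwise bound
\[
|K_n^\alpha(y)|\le C_\alpha\min\!\bigl(n,\,n^{-\alpha}|y|^{-\alpha-1}\bigr),\qquad |y|\le\pi,
\]
valid for every $\alpha>0$, which follows from the standard asymptotics of Cesàro kernels. Setting $r=1/n$ and $\xi(z)=\min(1,|z|^{-\alpha-1})$, this becomes $|K_n^\alpha(y)|\le C_\alpha r^{-1}\xi(y/r)$, and $\xi$ is non-negative, even, and non-increasing in $|z|$, with $\int_{\ZR}\xi(z)\log(2+|z|)\,dz<\infty$ because $\alpha>0$. Together with the matching lower bound $K_n^\alpha(y)\ge c_\alpha n$ for $|y|\le 1/n$ and the fact that $\|K_n^\alpha\|_{L^1(\ZT)}$ is bounded uniformly in $n$, a simple normalization yields a kernel-structure satisfying \e{r51} with a spherical $\xi$ obeying \e{x60}. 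This defines a KB-couple on $\ZT$ to which the previous corollary applies, giving $\MM_\phi(f)\in \BLO(\ZT)\subset \BMO(\ZT)$ together with $\|\MM_\phi(f)\|_{\BLO}\lesssim\|f\|_{\BMO}$ for $f\in \BMO(\ZT)$.

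The elementary pointwise bound
\[
\sigma^*(x,f)\le \sup_n\int_{\ZT}|f(t)||K_n^\alpha(x-t)|\,dt\lesssim \MM_\phi(f)(x)
\]
then controls $\sigma^*(f)$ by this $\BLO$ majorant. The main obstacle is exactly the transition from pointwise domination to $\BMO$-membership of $\sigma^*(f)$: since $\BMO$ is not a Banach lattice, $\sigma^*(f)\lesssim\MM_\phi(f)$ is not automatic. I would close this gap by mimicking the localization in the proof of \trm{T2}. For a fixed ball $B$, decompose $f=f_{\phi_B}+(f-f_{\phi_B})\ZI_{B^*}+(f-f_{\phi_B})\ZI_{\ZT\setminus B^*}$ and exploit the linearity of each $\sigma_n$: the constant part contributes only $|f_{\phi_B}|$, which cancels in any oscillation; the local part is controlled on $B$ via the $L^2$-boundedness of $\sigma^*$, itself immediate from the kernel estimate; and the tail part is nearly constant on $B$ because for $t\notin B^*$ the kernel $K_n^\alpha(x-t)$ varies on $B$ by a factor governed by the $\xi$-decay, bounding the tail's oscillation on $B$ by an $I(\xi)\|f\|_{\BMO}$-multiple (exactly as in \lem{LK2}). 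Summing the three contributions gives $\LOSC_{B,\alpha}(\sigma^*(f))\lesssim\|f\|_{\BMO}$ for every ball $B$, whence $\sigma^*(f)\in \BMO(\ZT)$ with the desired norm bound. The extension to $\ZT^d$ proceeds identically using product Cesàro kernels.
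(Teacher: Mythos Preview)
The paper gives no proof of this corollary beyond the words ``As a corollary of this we have'', so your proposal is already more detailed than what appears there. You are right to flag the lattice issue: $\sigma^*(x,f)=\sup_n|\sigma_n(x,f)|$ is only \emph{dominated} by a kernel-maximal operator of the paper's type (with $\phi_n\sim|K_n^\alpha|/\|K_n^\alpha\|_1$), it is not literally of that form, because the operators $\MM_\ZG$ in \e{y6} integrate $|f|$ against a non-negative kernel while $\sigma_n$ integrates $f$ against a signed one. Since $\BMO$ is not a lattice, the pointwise bound $\sigma^*(f)\lesssim\MM_\phi(f)$ does not by itself place $\sigma^*(f)$ in $\BMO$, and the paper's one-line reduction really does leave something to check.

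Your proposed fix, however, is not the right adaptation of \trm{T2}. The decomposition $f=c+g_1+g_2$ with a \emph{single} constant $c=f_{\phi_B}$ fails at coarse scales: when $1/n\gg|B|$ the tail term $\sigma_n(x,g_2)$ involves averaging $f-f_{\phi_B}$ over an arc of length $\sim 1/n$, and by \lem{L10} this is of order $\|f\|_\BMO\log\bigl(1/(n|B|)\bigr)$, which is not uniformly bounded in $B$. The tail is therefore neither small nor ``nearly constant'' in the sense you claim, and the $\xi$-decay alone does not supply a H\"ormander-type bound on $K_n^\alpha(x-t)-K_n^\alpha(x'-t)$. What does go through is to follow the proof of \trm{T2} more literally: for $x\in B$ pick the (nearly) optimal $n$, split into the cases $1/n>|B|$ and $1/n\le |B|$, and in each case compare $|\sigma_n(x,f)|$ with $|\sigma_m(x',f)|$ for an $m$ adapted to the \emph{larger} of the two scales. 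In the first case one shows $|\sigma_n(x,f)-f_{I}|\lesssim\|f\|_\BMO$ for an arc $I\ni x,x'$ of length $\sim 1/n$ (this is the analogue of \lem{LK2} and needs only the size bound on $|K_n^\alpha|$); in the second case one uses the local piece $g=(f-f_{\phi_B})\ZI_{B^*}$ and the weak-$L^1$ bound for $\MM$ exactly as in \trm{T2}. It is this scale-dependent comparison, not a fixed three-term splitting, that closes the gap.
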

\subsection{Dyadic maximal operators}
The next example will be considered in the Lebesgue measure space on $[0,1]$, where the ball-basis $\ZB$ is the set of dyadic intervals
\begin{equation*}
	\left[\frac{j-1}{2^k},\frac{j}{2^k}\right),\quad 1\le j\le 2^k,\quad n\ge 0.
\end{equation*}
 Let $I_n(x)$ be the unique dyadic interval of length $2^{-n}$, $n\ge 0$ , containing the point $x\in [0,1)$. Denote by $\ZB(x)$ the sequence of dyadic intervals $I_n(x)$, $n=0,1,2\ldots $. Consider a numerical sequence $\alpha=\{\alpha_k\ge 0:\, k\ge 0\}$, satisfying 
	\begin{align}\label{x58}
		J(\alpha)=\sum_{k=0}^\infty (k+1)\alpha_k<\infty.
	\end{align}
Let $I$ be a dyadic interval of length $2^{-n}$, $n\ge 0$. Clearly, we have $I=I_n(x)$ for any $x\in I$.  We associate with this interval the kernel function 
\begin{equation}
	\phi_I(y)=\phi_n(x,y)=\sum_{k=0}^n\alpha_{n-k}\frac{\ZI_{I_k(x)}(y)}{|I_k(x)|},\quad x,y\in [0,1),\quad n=0,1,2,\ldots,
\end{equation}
and consider the corresponding maximal function
\begin{equation}\label{x59}
	\MM_{\alpha}f(x)=\sup_{n\ge 0} \int_\ZR |f(t)|\phi_n(x,t)dt.
\end{equation}
\begin{corollary}
	Under the condition \e{x58} maximal operator \e{x59} maps boundedly dyadic $\BMO(0,1)$ into dyadic $\BLO(0,1)$.
\end{corollary}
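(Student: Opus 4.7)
The plan is to apply \cor{KC2} to a KB-couple on $([0,1],dx)$ built from the dyadic ball-basis $\ZB$. The basis $\ZB$ is doubling: viewed as a martingale basis (example~(5)), each dyadic $I$ has parent of measure $2|I|$, so $I^*=\pr(I)$ and $\ZK=2$ work. I take the basis-structure $\ZB(x):=\{I_n(x):n\ge 0\}$, which trivially satisfies G1, and G2 with $\eta=1$ since every dyadic interval containing $x$ already has the form $I_n(x)$.

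For the kernel-structure, associate with $I=I_n(x)$ the normalized kernel $\phi_I:=\phi_n(x,\cdot)/A_n$, where $A_n:=\sum_{m=0}^n\alpha_m$; this enforces K1. Let $A:=\sum_m\alpha_m<\infty$ (finite by \eqref{x58}) and $j_0$ be the smallest index with $\alpha_{j_0}>0$, so $A_n\in[\alpha_{j_0},A]$ for $n\ge j_0$. The key geometric observation is that for $y\in I_k(x)\setminus I_{k+1}(x)$ with $0\le k<n$, the smallest dyadic interval containing both $y$ and $I$ is $I_k(x)$, so $d(y,I)/|I|=2^{n-k}$. Writing $l=n-k$, a direct computation gives
\begin{equation*}
|I|\,\phi_I(y)=A_n^{-1}\sum_{m=l}^n\alpha_m 2^{-m}\le\alpha_{j_0}^{-1}\sum_{m\ge l}\alpha_m 2^{-m},
\end{equation*}
so I define $\omega(2^l):=\bigl(\sum_{m\ge 0}\alpha_m 2^{-m}\bigr)^{-1}\sum_{m\ge l}\alpha_m 2^{-m}$, extended as a non-increasing step function on $[1,\infty)$ with $\omega(1)=1$ and $\omega(2t)\le\omega(t)$. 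The K2 lower bound follows from $\phi_I(y)\ge\alpha_{j_0}2^{n-j_0}/A$ for $y\in I\subset I_{n-j_0}(x)$.

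The analytic core is verifying $I(\omega)<\infty$ from \eqref{x58}. A dyadic upper bound for the integral in \eqref{y7} yields
\begin{equation*}
I(\omega)\lesssim 1+\sum_{l\ge 0}2^l(l+1)\,\omega(2^l)\sim 1+\sum_{l\ge 0}(l+1)\sum_{m\ge l}\alpha_m 2^{l-m}.
\end{equation*}
Swapping the order of summation and using $2^{-m}\sum_{l=0}^m(l+1)2^l\lesssim m+1$ (the geometric sum is dominated by its last term), this collapses to $1+\sum_m(m+1)\alpha_m=1+J(\alpha)<\infty$ by \eqref{x58}. Then \cor{KC2} gives $\MM_\ZG f\in\BLO(0,1)$ with $\|\MM_\ZG f\|_\BLO\lesssim\|f\|_\BMO$.

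The main obstacle is that the KB-couple operator $\MM_\ZG f=\sup_n A_n^{-1}\int|f|\phi_n(x,\cdot)$ literally differs from the stated $\MM_\alpha f=\sup_n\int|f|\phi_n(x,\cdot)$ by the $n$-dependent normalization $A_n$. Since $A_n$ lies uniformly in $[\alpha_{j_0},A]$ the two operators are pointwise comparable, yet pointwise comparison does not preserve BLO. The cleanest remedy is to trace \lem{LK2}: the identity $|f_{\phi_A}-f_A|\le\langle f-f_A\rangle_{\phi_A}$, which depends on K1, is replaced by the bound $|f_{\phi_A}-f_A|\le C_A^{-1}\langle f-f_A\rangle_{\phi_A}$ after one defines the weighted mean as $f_{\phi_A}:=C_A^{-1}\int f\phi_A$ with $C_A:=\int\phi_A\asymp 1$, and the remainder of the proof of \trm{T2} carries through with admissible constants now also depending on $A/\alpha_{j_0}$. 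Applied to the unnormalized kernels $\phi_n$, this yields the stated bound $\|\MM_\alpha f\|_\BLO\lesssim\|f\|_\BMO$.
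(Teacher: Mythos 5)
Your overall route mirrors the paper's: build a KB-couple from the dyadic ball-basis, verify K1--K2 with a modulus $\omega$ built from the tails $\sum_{m\ge l}\alpha_m 2^{-m}$, check $I(\omega)\lesssim 1+J(\alpha)$ by summation by parts, and invoke \cor{KC2}. Your verifications of G1--G2, the identity $|I|\,\phi_I(y)=A_n^{-1}\sum_{m=l}^n\alpha_m2^{-m}$, and the $I(\omega)<\infty$ computation are all correct, and you have noticed something the paper glosses over: as written, $\int_0^1\phi_I=A_n=\sum_{k\le n}\alpha_k$ is not $1$, so K1 genuinely fails for the kernels of the corollary, and the paper's bare assertion that ``one can check'' they form a KB-couple is incorrect as stated (for the same reason, the paper's $\omega$ has $\omega(1)=\sum_j\alpha_j2^{-j}\neq 1$).

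Where your proposal does not close the gap is the final ``cleanest remedy.'' You normalize $\phi_I\mapsto\phi_I/A_n$ (which changes the operator, as you rightly say) and then try to undo this by tracing \lem{LK2} with $f_{\phi_A}:=C_A^{-1}\int f\phi_A$ and the claim that $C_A\asymp1$ is enough for the rest of \trm{T2}. It is not. The step at \e{a10}/\e{a11} uses $\langle f\rangle_{\phi_{\bar B}}\ge|f_{\phi_{\bar B}}|-\langle f-f_{\phi_{\bar B}}\rangle_{\phi_{\bar B}}$, and unwinding this with an unnormalized kernel produces an uncancelled remainder $(1-\int\phi_{\bar B})\,|f_{\phi_{\bar B}}|$, which is \emph{not} controlled by $\langle f\rangle^*_{\#,B}$ --- it involves the size of $f$, not its oscillation. ``$\asymp1$'' does not kill it; you specifically need $\int\phi_{\bar B}\ge1$, so that this term has the \emph{right sign} and may be dropped. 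In the dyadic setting $\int\phi_{I_n}=A_n$ is nondecreasing, so the cure is to rescale the sequence so that $\alpha_0=1$ (a harmless positive rescaling of $\MM_\alpha$), keep the kernels \emph{unnormalized} so that $\MM_\ZG=\MM_\alpha$, note $1=\alpha_0\le A_n\le A<\infty$ for all $n$, relax K1 to $1\le\int\phi_B\lesssim1$, rescale $\omega$ by $\omega(1)^{-1}$, and then check that \lem{LK2} (with $f_{\phi_A}:=(\int\phi_A)^{-1}\int f\phi_A$) and \trm{T2} go through precisely because $\int\phi_{\bar B}\ge1$ makes the extra term nonpositive. Normalizing by $A_n$ first, as you do, is the wrong direction: it guarantees K1 but sabotages the equality of operators, and the pointwise comparability you are then left with is indeed insufficient for BLO.
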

\begin{proof}
 One can check that the collections $\ZB(x)$ together with the family of kernels $\{\phi_I:\, I\in \ZB\}$ form a KB-couple. Then we define 
\begin{equation*}
	\omega(t)=\sum_{k=0}^\infty\left(\sum_{j\ge k}\frac{\alpha_j}{2^j}\right)\ZI_{[2^k,2^{k+1})}(t), \quad t\ge 0.
\end{equation*}
Clearly this is a modulus of continuity. Besides, we have
\begin{align}
	I(\omega)&=1+\int_1^\infty\omega(t)\log(2+t)dt\\
	&\lesssim 1+\sum_{k=0}^\infty (k+1)2^k\sum_{j\ge k}\frac{\alpha_j}{2^j}\\
	&=1+\sum_{j=0}^\infty\frac{\alpha_j}{2^j}\sum_{k=0}^j(k+1)2^k\le 1+\sum_{j=0}^\infty(j+1)\alpha_j=1+J(\alpha)< \infty.
\end{align}
Then, let us check the function $\phi_I$ satisfies \e{y4}. The left hand side bound in \e{y4} is clear. It remain to show for $I=I_n(x)$ that 
\begin{equation}\label{r57}
	\phi_I(y)=\sum_{j= 0}^n\alpha_{n-j} \frac{\ZI_{I_j(x)}(y)}{|I_j(x)|}\le \frac{1}{|I|}\cdot \omega\left(\frac{d(y,I)}{|I|}\right),\quad y\in [0,1).
\end{equation}
First, suppose that $y\not \in I=I_n(x)$. Then we will have $y\in I_k(x)\setminus I_{k+1}(x)$ for some $ 0\le k\le n-1$ and therefore
\begin{equation*}
 \phi_I(y)=\sum_{j=0}^k\alpha_{n-j} \cdot 2^{j}.
\end{equation*} 
We also have $d(y,I)=2^{n-k}|I|$. Hence the right hand side of \e{r57} is equal
\begin{align*}
	\frac{1}{|I|}\cdot \omega\left(2^{n-k}\right)=2^n\sum_{j\ge n-k}\frac{\alpha_j}{2^j}\ge \sum_{j= n-k}^{n}\alpha_j\cdot 2^{n-j}=\sum_{j= 0}^{k}\alpha_{n-j}\cdot 2^{j}.
\end{align*}
Thus \e{r57} follows if $y\not\in I$. In the case $x\in I$ \e{r57} holds trivially. 
Thus we conclude that the collection of kernels $\{\phi_I\}$ satisfy the hypothesis of \trm{T2} and \cor {KC2}. This completes the proof of corollary.  
\end{proof}
\begin{remark}
	One can similarly define a maximal operator based on general doubling martingale-filtration instead of the dyadic intervals. 
\end{remark}

\begin{bibdiv}
\begin{biblist}
	\bib{Ben}{article}{
	author={Bennett, Colin},
	title={Another characterization of BLO},
	journal={Proc. Amer. Math. Soc.},
	volume={85},
	date={1982},
	number={4},
	pages={552--556},
	issn={0002-9939},
	review={\MR{660603}},
	doi={10.2307/2044064},
}
	\bib{BDS}{article}{
	author={Bennett, Colin},
	author={DeVore, Ronald A. },
	author={ Sharpley, Robert},
	title={Weak-$L^\infty$ and $\BMO$},
	journal={Annals of Math.},
	volume={113},
	date={1981},
	number={3},
	pages={601--611},
	doi={10.2307/2006999},
}
\bib{ChFr}{article}{
	author={Chiarenza, Filippo},
	author={Frasca, Michele},
	title={Morrey spaces and Hardy-Littlewood maximal function},
	journal={Rend. Mat. Appl. (7)},
	volume={7},
	date={1987},
	number={3-4},
	pages={273--279 (1988)},
	issn={1120-7183},
	review={\MR{985999}},
}
\bib{CoRo}{article}{
	author={Coifman, R. R.},
	author={Rochberg, R.},
	title={Another characterization of BMO},
	journal={Proc. Amer. Math. Soc.},
	volume={79},
	date={1980},
	number={2},
	pages={249--254},
	issn={0002-9939},
	review={\MR{565349}},
	doi={10.2307/2043245},
}

\bib{Fef}{article}{
	author={Fefferman, Charles},
	title={Characterizations of bounded mean oscillation},
	journal={Bull. Amer. Math. Soc.},
	volume={77},
	date={1971},
	pages={587--588},
	issn={0002-9904},
	review={\MR{280994}},
	doi={10.1090/S0002-9904-1971-12763-5},
}
\bib{FeSt}{article}{
	author={Fefferman, C.},
	author={Stein, E. M.},
	title={$H^{p}$ spaces of several variables},
	journal={Acta Math.},
	volume={129},
	date={1972},
	number={3-4},
	pages={137--193},
	issn={0001-5962},
	review={\MR{447953}},
	doi={10.1007/BF02392215},
}
\bib{DG}{article}{
	author={Dafni, Galia},
	author={Gibara, Ryan},
	title={BMO on shapes and sharp constants},
	conference={
		title={Advances in harmonic analysis and partial differential
			equations},
	},
	book={
		series={Contemp. Math.},
		volume={748},
		publisher={Amer. Math. Soc., [Providence], RI},
	},
	date={[2020] \copyright 2020},
	pages={1--33},
	review={\MR{4085448}},
	doi={10.1090/conm/748/15054},
}
\bib{DaGi}{article}{
	author={Dafni, Galia},
	author={Gibara, Ryan},
	author={Yue, Hong},
	title={Geometric maximal operators and BMO on product bases},
	journal={J. Geom. Anal.},
	volume={31},
	date={2021},
	number={6},
	pages={5740--5765},
	issn={1050-6926},
	review={\MR{4267624}},
	doi={10.1007/s12220-020-00501-3},
}

\bib{Ming}{article}{
	author={Cao, Mingming},
	author={Ibañez-Firnkorn, Gonzalo },
	author={Rivera-RíosI, Israel P. },
	author={Xue, Qingying }
	author={Yabuta, Kôzô  }
	title={A class of multilinear bounded oscillation operators on measure spaces and applications},
	journal={Math. Ann.},
	volume={291},
	date={2018},
	number={11-12},
	pages={1908--1918},
	issn={0025-584X},
	review={\MR{3844813}},
	doi={10.1002/mana.201700318},
}

\bib{JaTo}{article}{
	author={Jawerth, B.},
	author={Torchinsky, A.},
	title={Local sharp maximal functions},
	journal={J. Approx. Theory},
	volume={43},
	date={1985},
	number={3},
	pages={231--270},
	issn={0021-9045},
	review={\MR{779906}},
	doi={10.1016/0021-9045(85)90102-9},
}
\bib{John}{article}{
	author={John, F.},
	title={Quasi-isometric mappings},
	conference={
		title={Seminari 1962/63 Anal. Alg. Geom. e Topol., Vol. 2, Ist. Naz.
			Alta Mat},
	},
	book={
		publisher={Ed. Cremonese, Rome},
	},
	date={1965},
	pages={462--473},
	review={\MR{190905}},
}
\bib{JoNi}{article}{
	author={John, F.},
	author={Nirenberg, L.},
	title={On functions of bounded mean oscillation},
	journal={Comm. Pure Appl. Math.},
	volume={14},
	date={1961},
	pages={415--426},
	issn={0010-3640},
	review={\MR{131498}},
	doi={10.1002/cpa.3160140317},
}

\bib{Kar3}{article}{
	author={Karagulyan, Grigori A.},
	title={An abstract theory of singular operators},
	journal={Trans. Amer. Math. Soc.},
	volume={372},
	date={2019},
	number={7},
	pages={4761--4803},
	issn={0002-9947},
	review={\MR{4009440}},
	doi={10.1090/tran/7722},
}
\bib{Kar1}{article}{
	author={Karagulyan, Grigori A.},
	title={On good-$\lambda$ inequalities for couples of measurable
		functions},
	journal={Indiana Univ. Math. J.},
	volume={70},
	date={2021},
	number={6},
	pages={2405--2425},
	issn={0022-2518},
	review={\MR{4359914}},
	doi={10.1512/iumj.2021.70.8722},
}

\bib{Lec1}{article}{
	author={Leckband, M. A.},
	title={Structure results on the maximal Hilbert transform and two-weight
		norm inequalities},
	journal={Indiana Univ. Math. J.},
	volume={34},
	date={1985},
	number={2},
	pages={259--275},
	issn={0022-2518},
	review={\MR{783915}},
	doi={10.1512/iumj.1985.34.34016},
}
\bib{Lec2}{article}{
	author={Leckband, M. A.},
	title={A note on exponential integrability and pointwise estimates of
		Littlewood-Paley functions},
	journal={Proc. Amer. Math. Soc.},
	volume={109},
	date={1990},
	number={1},
	pages={185--194},
	issn={0002-9939},
	review={\MR{1007504}},
	doi={10.2307/2048378},
}
\bib{Ler}{article}{
	author={Lerner, A. K.},
	title={On pointwise estimates for maximal and singular integral
		operators},
	journal={Studia Math.},
	volume={138},
	date={2000},
	number={3},
	pages={285--291},
	issn={0039-3223},
	review={\MR{1758861}},
	doi={10.4064/sm-138-3-285-291},
}
\bib{Ou}{article}{
	author={Ou, Winston},
	title={The natural maximal operator on BMO},
	journal={Proc. Amer. Math. Soc.},
	volume={129},
	date={2001},
	number={10},
	pages={2919--2921},
	issn={0002-9939},
	review={\MR{1840094}},
	doi={10.1090/S0002-9939-01-05896-8},
}
\bib{Pee}{article}{
	author={Peetre, Jaak},
	title={On convolution operators leaving $L^{p,}\,^{\lambda }$ spaces
		invariant},
	journal={Ann. Mat. Pura Appl. (4)},
	volume={72},
	date={1966},
	pages={295--304},
	issn={0003-4622},
	review={\MR{209917}},
	doi={10.1007/BF02414340},
}
\bib{Saa}{article}{
	author={Saari, Olli},
	title={Poincar\'{e} inequalities for the maximal function},
	journal={Ann. Sc. Norm. Super. Pisa Cl. Sci. (5)},
	volume={19},
	date={2019},
	number={3},
	pages={1065--1083},
	issn={0391-173X},
	review={\MR{4012803}},
}
\bib{Spa}{article}{
	author={Spanne, Sven},
	title={Some function spaces defined using the mean oscillation over
		cubes},
	journal={Ann. Scuola Norm. Sup. Pisa Cl. Sci. (3)},
	volume={19},
	date={1965},
	pages={593--608},
	issn={0391-173X},
	review={\MR{190729}},
}
\bib{Ste}{book}{
	author={Stein, Elias M.},
	title={Harmonic analysis: real-variable methods, orthogonality, and
		oscillatory integrals},
	series={Princeton Mathematical Series},
	volume={43},
	note={With the assistance of Timothy S. Murphy;
		Monographs in Harmonic Analysis, III},
	publisher={Princeton University Press, Princeton, NJ},
	date={1993},
	pages={xiv+695},
	isbn={0-691-03216-5},
	review={\MR{1232192}},
}

\bib{Str}{article}{
	author={Str\"{o}mberg, Jan-Olov},
	title={Bounded mean oscillation with Orlicz norms and duality of Hardy
		spaces},
	journal={Indiana Univ. Math. J.},
	volume={28},
	date={1979},
	number={3},
	pages={511--544},
	issn={0022-2518},
	review={\MR{529683}},
	doi={10.1512/iumj.1979.28.28037},
}
\bib{Wang}{article}{
	author={Wang, Dinghuai},
	author={Zhou, Jiang},
	author={Teng, Zhidong},
	title={Some characterizations of BLO space},
	journal={Math. Nachr.},
	volume={291},
	date={2018},
	number={11-12},
	pages={1908--1918},
	issn={0025-584X},
	review={\MR{3844813}},
	doi={10.1002/mana.201700318},
}

\end{biblist}
\end{bibdiv}
\end{document}